\theoremstyle{definition}
\newtheorem{thm}{Theorem}[section]
\newtheorem{prop}[thm]{Proposition}
\newtheorem{example}[thm]{Example}
\newtheorem{lemma}[thm]{Lemma}
\newtheorem{defn}[thm]{Definition}
\newtheorem{cor}[thm]{Corollary}
\newtheorem{question}[thm]{Question}
\newtheorem{rem}[thm]{Remark}
\DeclareMathOperator{\Des}{Des}
\DeclareMathOperator{\Dtop}{Dtop}
\DeclareMathOperator{\Pin}{Pin}
\DeclareMathOperator{\Pk}{Pk}
\DeclareMathOperator{\std}{std}
\newcommand{\pfrk}{\mathfrak{p}}
\newcommand{\Pcal}{\mathcal{P}}
\newcommand{\ww}{\widehat{w}}
\renewcommand\labelenumi{\theenumi.}
\title{The Pinnacle Set of a Permutation}
\author[Davis]{Robert Davis}%
\address{Department of Mathematics, Michigan State University, East Lansing, MI 48824}
\email{davisr@math.msu.edu}
\author[Nelson]{Sarah A.~Nelson}%
\address{School of Mathematics and Computing Sciences, Lenoir-Rhyne University, Hickory, NC 28601}
\email{sarah.nelson@lr.edu}
\author[Petersen]{T. Kyle Petersen$^*$}%
\address{Department of Mathematical Sciences, DePaul University, Chicago, IL 60614}
\email{tpeter21@depaul.edu}%
\author[Tenner]{Bridget E.~Tenner$^*$}%
\address{Department of Mathematical Sciences, DePaul University, Chicago, IL 60614}
\email{bridget@math.depaul.edu}%
\thanks{$^*$ Partially supported by Simons Foundation Collaboration Grants for Mathematicians.}
\begin{document}

\begin{abstract}
The peak set of a permutation records the indices of its peaks. These sets have been studied in a variety of contexts, including recent work by Billey, Burdzy, and Sagan, which enumerated permutations with prescribed peak sets. In this article, we look at a natural analogue of the peak set of a permutation, instead recording the values of the peaks. We define the ``pinnacle set'' of a permutation $w$ to be the set $\{w(i) : i \text{ is a peak of }w\}$. Although peak sets and pinnacle sets mark the same phenomenon for a given permutation, the behaviors of these sets differ in notable ways as distributions over the symmetric group.
In the work below, we characterize admissible pinnacle sets and study various enumerative questions related to these objects.
\end{abstract}

\maketitle

\section{Introduction}\label{sec:intro}

Let $S_n$ denote the set of permutations of $[n] = \{1,2,\ldots, n\}$, which we will always write as words, $w = w(1)w(2)\cdots w(n)$. An \emph{ascent} of a permutation $w$ is an index $i$ such that $w(i)< w(i+1)$, while a \emph{descent} is an index $i$ such that $w(i) > w(i+1)$. A \emph{peak} is a descent that is preceded by an ascent, whereas a \emph{valley} is an ascent that is preceded by a descent. This terminology refers to the shape of the graph of $w$, that is, the set of points $(i,w(i))$. 


\begin{example}
The descents of $315264 \in S_6$ are $1$, $3$, and $5$, and the ascents are $2$ and $4$. The peaks are $3$ and $5$, while the valleys are $2$ and $4$.
\end{example}

The \emph{descent set} of a permutation $w$, denoted $\Des(w)$, is the collection of its descents,
\[
 \Des(w) = \{ i : w(i) > w(i+1) \} \subseteq [n-1],
\]
while the \emph{peak set} of a permutation $w$, denoted $\Pk(w)$, is the collection of its peaks,
\[
 \Pk(w) = \{ i : w(i-1) < w(i) > w(i+1) \} \subseteq \{2,3,\dots,n-1\}.
\]
Note in particular that the descent set completely determines the peak set:
\[
 \Pk(w) = \{ i > 1 : i \in \Des(w) \text{ and } i-1 \notin \Des(w)\}.
\]
Any subset of $\{1,2,\ldots,n-1\}$ is the descent set of some permutation in $S_n$, but the same cannot be said for peak sets. First of all, peaks cannot occur in the first or last positions of a permutation, so $\Pk(w) \subseteq \{2,\ldots,n-1\}$ for any $w \in S_n$. Moreover, peaks cannot occur in consecutive positions, so if $i \in \Pk(w)$ then $i\pm1 \not\in \Pk(w)$. This characterization of peak sets, as subsets of $\{2,\ldots,n-1\}$ with no consecutive elements, turns out to imply that the number of distinct peaks sets is given by the Fibonacci numbers.

It has long been known that counting permutations according to the number of descents gives rise to the Eulerian numbers, while the number of permutations with a given descent set is also well known; see, e.g., \cite[Example 2.2.4]{StanleyEC1}.
More recently Billey, Burdzy, and Sagan~\cite{BilleyBurdzySagan} considered the related enumerative question for peaks: how many permutations in $S_n$ have a given peak set? One of their results is that for a fixed set $S$, the number of $w \in S_n$ for which $\Pk(w) = S$ is a power of two times a polynomial in $n$, and they give techniques for explicit computation of this polynomial in special cases.
As a follow up to this work, Kasraoui~\cite{Kasraoui} verified their related conjecture about which peak sets of a given cardinality maximize the number of permutations in $S_n$ for a given $n$.

In the present article, we study analogous questions related to peaks, but rather than tracking peaks by their positions ($x$-coordinates in the graph of the permutation), we use their values ($y$-coordinates).

\begin{defn}
A \emph{pinnacle} of a permutation $w$ is a value $w(i)$ such that $w(i-1)< w(i) > w(i+1)$; equivalently, $j$ is a pinnacle of $w$ if and only if $w^{-1}(j) \in \Pk(w)$. The \emph{pinnacle set} of $w$ is
\[
 \Pin(w) = \{ w(i) : i \in \Pk(w) \}.
\]
\end{defn}

Certainly $|\Pk(w)|=|\Pin(w)|$, but the sets themselves need not be the same, as we now demonstrate. 

\begin{example}\label{ex:peaks and pinnacles}
	If $w = 315264$, then $\Pk(w) = \{3,5\}$ and $\Pin(w) = \{5,6\}$.
\end{example}

The definition of pinnacle sets leads naturally to questions about the value
\begin{equation}\label{eqn:counting permutations with a given pinnacle set}
 p_S(n) := | \{ w \in S_n : \Pin(w) = S\} |.
\end{equation}
While similar notation was used to denote the peak polynomial, e.g. in \cite{BilleyBurdzySagan, BilleyFahrbachTalmage, DiazHarrisInskoOmar}, note that $p_S(n)$ is counting the number of permutations  with a given pinnacle set $S$ in this paper.
The questions we address in this article are the following.
\begin{question}
When is $p_S(n) > 0$? That is, which sets $S$ are the pinnacle set of some permutation in $S_n$?\label{q:0}
\end{question}
\begin{question}
Given a pinnacle set $S \subseteq [n]$, how do we compute $p_S(n)$? \label{q:1}
\end{question}
\begin{question}
For a given $n$, what choice of $S \subseteq [n]$ maximizes or minimizes $p_S(n)$? \label{q:2} 
\end{question}

In Section~\ref{sec:admissible} we identify conditions under which a set $S$ is the pinnacle set for some permutation, fully answering Question \ref{q:0}. 

\begin{defn}\label{def:admissible}
A set $S$ is an \emph{$n$-admissible pinnacle set} if there exists a permutation $w \in S_n$ such that $\Pin(w) = S$.
If $S$ is $n$-admissible for some $n$, then we simply say that $S$ is \emph{admissible}.
\end{defn}


The empty set is always an $n$-admissible pinnacle set, because it is the pinnacle set of the identity permutation. Examples of nonempty admissible pinnacle sets are shown in Table \ref{tab:pinn}. The main result about admissible pinnacle sets is the following.

\begin{thm}[Admissible pinnacle sets]\label{thm:pinsets}
Let $S$ be a nonempty set of integers with $\max S = m$. Then $S$ is an admissible pinnacle set if and only if both
\begin{enumerate}
\item $S\setminus\{m\}$ is an admissible pinnacle set, and
\item $m > 2|S|$.
\end{enumerate}
Moreover, there are $\binom{m-2}{\lfloor m/2 \rfloor}$ admissible pinnacle sets with maximum $m$, and 
\[
 1+\sum_{m=3}^n \binom{m-2}{\lfloor m/2 \rfloor} = \binom{n-1}{\lfloor (n-1)/2 \rfloor},
\]
admissible pinnacle sets $S \subseteq [n]$.
\end{thm}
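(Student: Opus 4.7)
The plan is to first replace the recursive characterization with the cleaner equivalent condition: writing $S = \{s_1 < s_2 < \cdots < s_k\}$, the set $S$ is admissible if and only if $s_j > 2j$ for every $j \in [k]$. Unfolding the recursion in the theorem statement yields exactly this elementwise inequality, so it suffices to prove the if-and-only-if under this reformulation. The counting in the second half then follows by induction via Pascal's identity.

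For the ``only if'' direction, suppose $\Pin(w) = S$ and let $\pi_1, \ldots, \pi_k$ be the pinnacles in order of position. In the prefix before $\pi_1$, in the suffix after $\pi_k$, and in each of the $k-1$ intervals between adjacent pinnacle positions, select one index attaining the minimum value on that stretch. This produces $k+1$ entries $\mu_0, \mu_1, \ldots, \mu_k$, each strictly less than its neighboring pinnacle(s). Extract the subsequence $\mu_0 \pi_1 \mu_1 \pi_2 \cdots \pi_k \mu_k$ and standardize to obtain an alternating permutation $w^* \in S_{2k+1}$ with peak values $\{s_1^* < \cdots < s_k^*\}$. Fix $j$; the $j$ smallest peaks of $w^*$ occupy $j$ of the even positions in $[2k+1]$. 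Since two consecutive even positions share one adjacent odd position and non-consecutive ones share none, the valley positions adjacent to those $j$ peaks comprise at least $j+1$ distinct odd indices. The values there are strictly less than their adjacent peak, hence less than $s_j^*$; together with the $j$ peak values themselves, at least $2j+1$ values of $w^* \in [2k+1]$ are $\leq s_j^*$, forcing $s_j^* \geq 2j+1$. Since $s_j^* \leq s_j$ (the rank of $s_j$ in any subset containing it is at most its value), we conclude $s_j > 2j$.

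For the ``if'' direction, assume $s_j > 2j$ for all $j$, set $m = s_k$, and list $[m] \setminus S = \{u_1 < u_2 < \cdots < u_{m-k}\}$. I construct $w \in S_m$ of the form
\[ w \;=\; v_1\, s_1\, v_2\, s_2\, \cdots\, v_k\, s_k\, v_{k+1}\, u_{k+2}\, u_{k+3}\, \cdots\, u_{m-k}, \]
where $(v_1, \ldots, v_{k+1})$ is an arrangement of $\{u_1, \ldots, u_{k+1}\}$. For each $s_i$ to become a peak of $w$, both neighbors $v_i$ and $v_{i+1}$ must be less than $s_i$. This amounts to a bipartite matching problem: slot $v_\ell$ has upper bound $s_1$ for $\ell \in \{1,2\}$ and $s_{\ell-1}$ for $\ell \geq 3$, and we must injectively match these slots to the values $u_1, \ldots, u_{k+1}$. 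Hall's condition reduces to $s_{\ell-1} > 2(\ell-1)$ for each $\ell$, which is precisely the admissibility hypothesis. Since $v_{k+1} \leq u_{k+1} < u_{k+2}$, the trailing run $v_{k+1}, u_{k+2}, \ldots, u_{m-k}$ is eventually increasing and introduces no new peaks, so $\Pin(w) = S$.

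For the counting, let $f(m)$ be the number of admissible subsets of $[m]$ with maximum $m$, so the total number of admissible subsets of $[n]$ is $g(n) := 1 + \sum_{m=3}^{n} f(m)$. The recursion $g(n) = g(n-1) + f(n)$, together with the characterization, allows a short induction on $n$ --- treating the even and odd cases separately via Pascal's identity --- to establish $g(n) = \binom{n-1}{\lfloor(n-1)/2\rfloor}$; then $f(m) = g(m) - g(m-1) = \binom{m-2}{\lfloor m/2 \rfloor}$ follows by a further Pascal identity. The main obstacle is the ``only if'' argument: one must first verify that the extracted subsequence is genuinely alternating (using that each $\mu_i$ is a true local minimum on its interval) and then carefully count the distinct adjacent valley positions in $w^*$. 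Once the characterization is in hand, the binomial identities in the second half of the theorem are routine.
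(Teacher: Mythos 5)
Your characterization argument is correct and takes a genuinely different route from the paper's. You unfold the recursion into the equivalent elementwise condition $s_j > 2j$, prove necessity by extracting the alternating subsequence of pinnacles and interleaved minima, standardizing, and counting the values forced below $s_j^*$, and prove sufficiency via a Hall's-theorem matching into the zigzag template. The paper instead proves the recursive form directly by induction on $|S|$: necessity follows from the canonical permutation $w_S$ (pinnacles in even positions, remaining values increasing) together with the bound of at most $\lfloor(m-1)/2\rfloor$ peaks, and sufficiency from a pigeonhole argument locating two adjacent non-pinnacles between which to insert $m$. Your version is self-contained and isolates a clean inequality; note only that Hall's theorem is overkill, since the monotone assignment $v_\ell = u_\ell$ already satisfies every constraint (at least $\ell$ non-pinnacles lie below $s_{\ell-1}$ because at most $\ell-2$ of the $2(\ell-1)$ values in $[s_{\ell-1}-1]$ belong to $S$) --- this recovers exactly the paper's $w_S$.

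The counting half, however, has a genuine gap. You set $f(m)$ to be the number of admissible sets with maximum $m$ and $g(n) = 1+\sum_{m\le n} f(m)$, then propose to prove $g(n)=\binom{n-1}{\lfloor(n-1)/2\rfloor}$ by induction via $g(n)=g(n-1)+f(n)$ and to deduce $f(m)=g(m)-g(m-1)$ afterwards. This is circular: those two identities are the same equation, so the induction cannot close without an independent evaluation of $f(n)$. Worse, the characterization gives $f(n)$ as the number of admissible $S'\subseteq[n-1]$ with $|S'|<n/2-1$, which depends on the distribution of admissible sets by cardinality, not just on the total $g(n-1)$; any honest induction must therefore track the two-parameter array $\pfrk(m;d)$, whose boundary values $\pfrk(2d+1;d)$ are Catalan numbers and need their own argument. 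This is precisely why the paper devotes a subsection to the bijection with diagonal lattice paths (marked down-steps weakly above and up-steps strictly below the axis), from which $\binom{m-2}{\lfloor m/2\rfloor}$ falls out by counting paths from $(0,0)$ to $(m-2,\epsilon_m)$; equivalently, one could count the sets satisfying $s_j\ge 2j+1$ by a ballot-number or reflection argument. As written, the ``routine Pascal identity'' step does not establish $f(m)=\binom{m-2}{\lfloor m/2\rfloor}$, and this is the substantive part of the second half of the theorem.
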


Our characterization of admissible pinnacle sets is in contrast to the characterization of peak sets mentioned earlier. Whereas the number of peak sets is given by the Fibonacci numbers, here we get a central binomial coefficient. 

%
%
%

In Section \ref{sec:rec} we develop both a quadratic and a linear recurrence for $p_S(n)$, which partially answers Question~\ref{q:1}. Further, we identify the following bounds for $p_S(n)$ partially answering Question~\ref{q:2}; the sets which achieve the tight bounds are constructed in Section~\ref{subsec: some formulas}.

\begin{thm}[Bounds on $p_S(n)$]\label{thm:bounds}
Let $d$ and $n$ be any positive integers such that $2d < n$. Then for any admissible pinnacle set $S\subseteq[n]$ such that $|S|=d$, we have the tight upper bound
\begin{equation}\label{eq:upbound}
p_S(n) \leq d!\cdot (d+1)! \cdot 2^{n-2d-1} \cdot S(n-d,d+1),
\end{equation}
where $S(\cdot,\cdot)$ denotes the Stirling number of the second kind,
and the tight lower bound
\begin{equation}
p_S(n) \geq 2^{n-d-1}.
\end{equation}
\end{thm}

It follows that across all admissible pinnacle sets $S\subseteq [n]$, the cardinality $\#\{w \in S_n : \Pin(w) = S\}$ has a uniform lower bound of $2^{\lfloor n/2 \rfloor}$, while the upper bound is achieved for the particular value of $d = |S|$ that maximizes Equation~\eqref{eq:upbound}. While this choice of $d$ appears to be close to $n/3$, we have no simple expression for $d$ in terms of $n$. Our best approximation so far (having checked as high as $n=5000$) is $d \approx n/3.13$.
Section \ref{sec:conclude} contains more discussion on this question, as well as other open questions.



We close the introduction with three remarks.

\begin{rem}[Descent topsets]\label{rem:topsets}
Just as the pinnacle set records the values that sit at peaks, the \emph{descent topset} records the values that sit at descents: 
\begin{align*}
\Dtop(w) &= \{ w(i) : w(i) > w(i+1) \} \\
&= \{ w(i) : i \in \Des(w) \}.
\end{align*}
Descent topsets and related ideas have appeared sporadically in the literature on permutation statistics, e.g., see \cite{EhrenborgSteingrimsson, EhrenborgSteingrimssonAlternating, FoataZ, HNTT, KitaevMansourRemmel, NovelliThibonWilliams, SteinWilliams}. Enumeration of permutations with a fixed topset is considered in work of Ehrenborg and Steingr\'imsson \cite{EhrenborgSteingrimsson}, via a correspondence with excedance sets. The question of enumeration by pinnacle sets does not appear to have been addressed in the literature.

While the peak set $\Pk(w)$ is completely determined by the descent set $\Des(w)$, the pinnacle set is not determined by the descent topset.  For example, suppose $w=3175264$ and $v = 7651324$. Then we have $\Dtop(w) = \{3,5,6,7\} = \Dtop(v)$, yet $\Pin(w) =\{6,7\}$ while $\Pin(v) = \{3\}$. Thus it seems unlikely that enumeration results for pinnacle sets will follow directly from results for descent topsets.

Since there are no enumerative results connecting pinnacle sets to descent topsets, we will briefly describe the enumeration of permutations by descent topsets due to Ehrenborg and Steingr\'imsson in Appendix \ref{sec:topsets}. \end{rem}

\begin{rem}[Descent algebras and peak algebras]
Grouping permutations according to descent sets or peak sets leads to interesting and well-studied algebraic structures. For example, the group algebra of the symmetric group has a subalgebra known as \emph{Solomon's descent algebra} \cite{Solomon}, with linear basis given by sums of descent classes, i.e., by the elements 
\[
 y_I = \sum_{\substack{w \in S_n \\ \Des(w) = I}} w.
\]
A subalgebra of Solomon's descent algebra known as the \emph{peak algebra} has a basis whose elements are sums of peak classes, i.e.,
\[
 z_J = \sum_{\substack{w \in S_n \\ \Pk(w) = J}} w.
\]
There are a number of papers investigating the connections between descent algebras and peak algebras, e.g., \cite{AguiarNymanOrellana, BilleraHsiaoVW, GarsiaReutenauer, Nyman, Schocker}. It is natural to wonder whether some similar algebraic structures can be associated to descent topsets or pinnacle sets. However, taking sums of descent topset classes or sums of pinnacle classes does not yield a subalgebra of the group algebra in general.
\end{rem}

\begin{rem}[Descent and Peak Polynomials]
Two polynomials that record information related to pinnacles are the peak polynomial and the descent polynomial.
A detailed analysis of the roots and expansions of peak polynomials in terms of binomials in given in \cite{BilleyFahrbachTalmage}.
This work proves that the coefficients of the peak polynomials, expanded in the binomial coefficient basis centered at the maximum of the corresponding subset, are all nonnegative.
Additionally, a recursive formula for the descent polynomial $d(S;n)$, as well as an exploration of their coefficients and roots, is given in \cite{DiazHarrisInskoOmarSagan}.
\end{rem}

\noindent\textbf{Acknowledgments.} 
The authors would like to thank Bruce Sagan for helpful conversations related to the project. In particular, he suggested the simple recursive proof for the total number of admissible pinnacle sets in Remark \ref{rem:alternate} and the recursive proof for the number of permutations with a given descent topset (proof of Theorem \ref{thm:eJ}).

\section{Admissible pinnacle sets}\label{sec:admissible}

Not every set is the peak set of a permutation. Likewise, not every set is the pinnacle set of a permutation.
For one thing, each peak must have a non-peak on each side of it, so the number of peaks must be strictly less than half the number of letters in the permutation.

\begin{lemma}[Limited number of peaks]\label{lem:halfn}
A permutation $w \in S_n$ has at most $\lfloor (n-1)/2\rfloor$ peaks. That is, $n > 2|\Pk(w)| = 2|\Pin(w)|$. 
\end{lemma}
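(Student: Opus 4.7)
The plan is to extract two elementary constraints on peak \emph{positions} and combine them with a pigeonhole-style counting argument. Since $|\Pk(w)| = |\Pin(w)|$ follows immediately from the definition of $\Pin(w)$ (each peak position maps to a distinct value under the bijection $w$), it suffices to bound $|\Pk(w)|$.

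First, I would observe that $\Pk(w) \subseteq \{2, 3, \ldots, n-1\}$. This is immediate from the definition, since the inequalities $w(i-1) < w(i) > w(i+1)$ require both $i-1$ and $i+1$ to be valid indices in $[n]$. Next, I would show that no two peaks can be consecutive: if $i$ is a peak, then in particular $w(i) > w(i+1)$, which means position $i+1$ satisfies $w(i) > w(i+1)$, preventing $i+1$ from being the right-hand endpoint of an ascent $w(i) < w(i+1)$ and hence preventing $i+1$ from being a peak.

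With these two facts in hand, the bound falls out by a position-counting argument. Suppose the peaks of $w$ are $i_1 < i_2 < \cdots < i_k$. From $\Pk(w) \subseteq \{2, \ldots, n-1\}$ we get $i_1 \geq 2$ and $i_k \leq n-1$, and from the non-consecutivity we get $i_{j+1} \geq i_j + 2$ for all $j$. Iterating gives $i_k \geq i_1 + 2(k-1) \geq 2k$, so $2k \leq i_k \leq n-1 < n$, which rearranges to $n > 2k = 2|\Pk(w)|$. Since $k$ is an integer, this is equivalent to $k \leq \lfloor (n-1)/2 \rfloor$, as claimed.

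There is no real obstacle here; the only minor point is keeping the indexing conventions straight so that the non-consecutivity argument is unambiguous. In particular, one should be careful to note that the relevant reason $i$ and $i+1$ cannot both be peaks is the sign of $w(i+1) - w(i)$ (forced to be negative by $i$ being a peak and positive by $i+1$ being a peak), rather than any statement about the peak set being an antichain in a broader sense.
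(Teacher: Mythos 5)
Your proof is correct and follows essentially the same route as the paper, which justifies the lemma by the observation that peaks lie in $\{2,\ldots,n-1\}$ and cannot occupy consecutive positions, so each peak needs a non-peak on each side. Your version simply makes the position-counting explicit via $i_k \geq i_1 + 2(k-1) \geq 2k \leq n-1$, which is a fine formalization of the paper's (unwritten) argument.
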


Our goal in this section is to push this result a bit further and to completely characterize pinnacle sets.

\subsection{Characterization of admissible pinnacle sets}

Recall from Definition~\ref{def:admissible} that a set $S$ is an $n$-admissible pinnacle set if there exists a permutation $w \in S_n$ such that $\Pin(w) = S$.

\begin{example}\label{ex:nonpin}\
\begin{enumerate}\renewcommand{\labelenumi}{(\alph{enumi})}
\item The set $S = \{3,7,8\}$ is an $8$-admissible pinnacle set because $\Pin(13247586) = S$. The set $S$ is certainly not $n$-admissible for any $n<8$, because $8 \in S$.
\item For the set $S = \{3, 5, 6\}$ to be an admissible pinnacle set, there would have to be a permutation
\[ 
w = \cdots a \ x \ b_1 \cdots b_2 \ y \ c_1 \cdots c_2 \ z \ d \cdots 
\]
such that $S = \{x,y,z\}$, with $a < x > b_1$, $b_2 < y > c_1$, and $c_2 < z > d$. It is possible that $b_1 = b_2$ or $c_1 = c_2$ or both, but $a$, $b_1$, $c_1$, and $d$ must be distinct. In fact, these four values must all be less than $6$, and none can be an element of $S$. However, there are only three positive integers less than $6$ and not in $S$, so there can be no such permutation $w$. Thus $S$ is not  an admissible pinnacle set.
\end{enumerate}
\end{example}

Pinnacle sets are stable in the sense that if $S$ is an $n$-admissible pinnacle set, then $S$ is also $(n+1)$-admissible. Indeed, if $\Pin(w) = S$ for $w \in S_n$, then we can form a permutation in $S_{n+1}$ with pinnacle set $S$ by putting $n+1$ at the far left or far right of the permutation. That is, if $u = (n+1)w(1)\cdots w(n)$ and $v = w(1)\cdots w(n)(n+1)$, then
\[
 \Pin(u) = \Pin(v) = \Pin(w).
\] 
Moreover, any other way to insert $n+1$ into $w$ will give a different pinnacle set, since $n+1$ would sit at a peak. Thus a kind of converse to this stability observation is the observation that if $\max S = m$, and $S$ is an $n$-admissible pinnacle set for some $n\geq m$, then $S$ is $m$-admissible.

Extending this idea leads to the following recursive characterization of admissible pinnacle sets, which establishes the first half of Theorem \ref{thm:pinsets} from the introduction.

\begin{prop}[Admissible pinnacle sets]\label{prop:admissible}
Let set $S$ be a nonempty set of integers with $\max S = m$. Then $S$ is an admissible pinnacle set if and only if both
\begin{enumerate}
\item $S\setminus\{m\}$ is an admissible pinnacle set, and
\item $m > 2|S|$.
\end{enumerate}
Moreover, $S$ is $n$-admissible for all $n\geq m$.
\end{prop}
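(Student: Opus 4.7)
The plan is to decouple the iff into two self-contained pieces: a restriction observation that extracts numerical inequalities $s_k > 2k$ from any witness of admissibility, and an explicit construction that builds a witness directly from those inequalities. With both in hand, neither direction of the iff needs its own induction, and the \emph{moreover} clause will follow as a corollary.

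Write $S = \{s_1 < s_2 < \cdots < s_d = m\}$ and suppose $w \in S_n$ has $\Pin(w) = S$. For any $M \le n$, let $w|_M$ denote the subsequence of $w$ on values at most $M$, viewed as a permutation of $[M]$. The key observation is that every $s \in S$ with $s \le M$ remains a pinnacle of $w|_M$, because its two $w$-neighbors are both strictly less than $s \le M$ and hence survive the restriction as its adjacent neighbors. Taking $M = s_k$, the length-$s_k$ permutation $w|_{s_k}$ retains at least $k$ pinnacles, so Lemma~\ref{lem:halfn} yields $s_k > 2k$ for every $k = 1,\ldots,d$. In particular, $m = s_d > 2d$, which is condition~(2).

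For the construction, suppose $s_k > 2k$ holds for all $k = 1,\ldots,d$, and set $T = [m] \setminus S = \{t_1 < t_2 < \cdots < t_{m-d}\}$. Define
\[
w^* = t_1,\, s_1,\, t_2,\, s_2,\, \ldots,\, t_d,\, s_d,\, t_{d+1},\, t_{d+2},\, \ldots,\, t_{m-d}.
\]
Counting the non-pinnacle values in $[s_k-1]$ gives $s_k - k \ge k+1$ of them, so $t_{k+1} < s_k$ for $k \le d-1$ (with $t_{d+1} < m$ automatic). From these inequalities: each $s_k$ sits between two smaller values (hence is a peak); each $t_k$ with $k \ge 2$ has the larger value $s_{k-1}$ on its left (hence is not a peak); the ascending tail $t_d, t_{d+1}, \ldots, t_{m-d}$ contributes no peaks; and positions $1$ and $m$ are endpoints. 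Hence $\Pin(w^*) = S$, and $w^* \in S_m$ shows $S$ is $m$-admissible. The \emph{moreover} clause then follows by inserting $m+1, m+2, \ldots, n$ one at a time at the front of $w^*$, each insertion preserving the pinnacle set.

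Putting the pieces together: for $(\Rightarrow)$, the restriction observation applied to a witness for $S$ gives $s_k > 2k$ for all $k \le d$, which is (2) for $k = d$ and, via the construction applied to $S \setminus \{m\}$ in ambient $[s_{d-1}]$, also produces a witness for (1). For $(\Leftarrow)$, the restriction observation applied to a witness for the admissible set $S \setminus \{m\}$ delivers $s_j > 2j$ for $j < d$; combined with the assumed $m > 2d$, this feeds the construction applied to $S$ and produces the required witness. The main obstacle is verifying, from the inequalities $s_k > 2k$, that the interleaved pattern $w^*$ really has no extraneous peaks at the odd positions or in the tail; everything else is clean bookkeeping with the restriction map.
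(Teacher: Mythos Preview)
Your proof is correct and takes a genuinely different route from the paper's. The paper argues by induction on $|S|$: for $(\Rightarrow)$ it invokes the canonical permutation $w_S$ (your $w^*$), applies Lemma~\ref{lem:halfn} once to get $m>2|S|$, and deletes $m$ from $w_S$ to witness admissibility of $S\setminus\{m\}$; for $(\Leftarrow)$ it takes an arbitrary witness $w'\in S_{m-1}$ for $S\setminus\{m\}$ and uses a pigeonhole count (more non-pinnacles than peaks) to locate two adjacent non-pinnacles between which $m$ can be inserted. You instead establish the unrolled characterization $s_k>2k$ for every $k$ in one stroke via the restriction $w\mapsto w|_{s_k}$, and then feed those inequalities into the canonical construction in \emph{both} directions, bypassing induction and pigeonhole entirely. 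Your route has the advantage of isolating the full numerical profile $s_k>2k$ explicitly---something the paper uses tacitly when verifying $\Pin(w_S)=S$ but never states---while the paper's pigeonhole insertion is pleasantly local and matches the recursive form of the statement. Two cosmetic points: your ``ascending tail'' should begin at $t_{d+1}$ rather than $t_d$ (the latter sits at position $2d-1$, already handled by the $s_{k-1}>t_k$ clause), and the $d=1$ boundary of $(\Rightarrow)$, where $S\setminus\{m\}=\emptyset$ and there is no $s_{d-1}$, deserves a word even though it is trivial.
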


Some admissible pinnacle sets are shown in Table \ref{tab:pinn}.

\begin{table}[htbp]
\[
\begin{array}{c|| l |l| l| l}
m & d = 1 & d = 2 & d = 3 & d = 4 \\
\hline \hline 
\raisebox{.1in}[.2in][.1in]{}3 & \{3\} & &\\
\hline
\raisebox{.1in}[.2in][.1in]{}4 & \{4\} & &\\
\hline
\raisebox{.1in}[.2in][.1in]{}5 & \{5\} & \{3,5\}, \{4,5\} &\\
\hline
\raisebox{.1in}[.2in][.1in]{}6 & \{6\} & \{3,6\}, \{4,6\}, &\\
\raisebox{.1in}[.1in][.1in]{}& &  \{5,6\} & \\
\hline
\raisebox{.1in}[.2in][.1in]{}7 & \{7\} & \{3,7\}, \{4,7\},  & \{3,5,7\}, \{3,6,7\}, \{4,5,7\}, \\
\raisebox{.1in}[.1in][.1in]{}& & \{5,7\}, \{6,7\}  & \{4,6,7\}, \{5,6,7\} \\
\hline
\raisebox{.1in}[.2in][.1in]{}8 & \{8\} & \{3,8\}, \{4,8\},  & \{3,5,8\}, \{3,6,8\}, \{3,7,8\},\\
\raisebox{.1in}[.1in][.1in]{} & & \{5,8\}, \{6,8\},  & \{4,5,8\}, \{4,6,8\}, \{4,7,8\}, \\
\raisebox{.1in}[.1in][.1in]{} & & \{7,8\} & \{5,6,8\}, \{5,7,8\}, \{6,7,8\} \\
\hline
\raisebox{.1in}[.2in][.1in]{}9 & \{9\} & \{3,9\}, \{4,9\}, & \{3,5,9\}, \{3,6,9\}, \{3,7,9\},& \{3,5,7,9\}, \{3,5,8,9\}, \{3,6,7,9\}, \\
\raisebox{.1in}[.1in][.1in]{} & & \{5,9\}, \{6,9\},&  \{3,8,9\}, \{4,5,9\}, \{4,6,9\},  & \{3,6,8,9\}, \{3,7,8,9\},\{4,5,7,9\}, \\
\raisebox{.1in}[.1in][.1in]{} & &  \{7,9\}, \{8,9\} & \{4,7,9\}, \{4,8,9\}, \{5,6,9\}, & \{4,5,8,9\}, \{4,6,7,9\}, \{4,6,8,9\}, \\
\raisebox{.1in}[.1in][.1in]{} & & &  \{5,7,9\}, \{5,8,9\}, \{6,7,9\}, & \{4,7,8,9\},\{5,6,7,9\}, \{5,6,8,9\}, \\
\raisebox{.1in}[.1in][.1in]{} & & & \{6,8,9\}, \{7,8,9\} & \{5,7,8,9\}, \{6,7,8,9\}
\end{array}
\]
\caption{Nonempty admissible pinnacle sets $S$ with maximum element $m$ and $|S|=d$.}\label{tab:pinn}
\end{table}

In order to prove this proposition, it will be helpful to have a canonical way to construct a permutation in $S_n$ for any $n\geq m$ with a given (admissible) pinnacle set, which we describe now.
First we order the elements of $S = \{ s_1 < s_2 < \cdots < s_d\}$. Then we use these as the values of the even positions of a permutation $w$, so that $w(2i) = s_i$ for $i \in [d]$. We place the elements not in $S$ into the odd positions of $w$, in increasing order. Let $w_S$ denote the permutation we have thus formed. 

More precisely, suppose $S= \{ s_1 < s_2 < \cdots < s_d\}$ and $s_d = m$.
Define the complementary set $[m]\setminus S = \{ t_1 < t_2 < \cdots < t_{m-d}\}$. Then, for $1 \leq i \leq m$, set
\begin{equation}\label{eqn:canonical perm with given pinnacle set}
	w_S(i) := \begin{cases}
		s_j & \text{ if } i = 2j \text{ and } i \leq 2d \\
		t_j & \text { if } i = 2j-1 \text{ and } i \leq 2d \\
		t_{i-d} & \text{ if } i > 2d.
		\end{cases}
\end{equation}

Visually, we can imagine labels on a ``mountain range" diagram, an illustration of which is shown in Figure \ref{fig:mountain}.

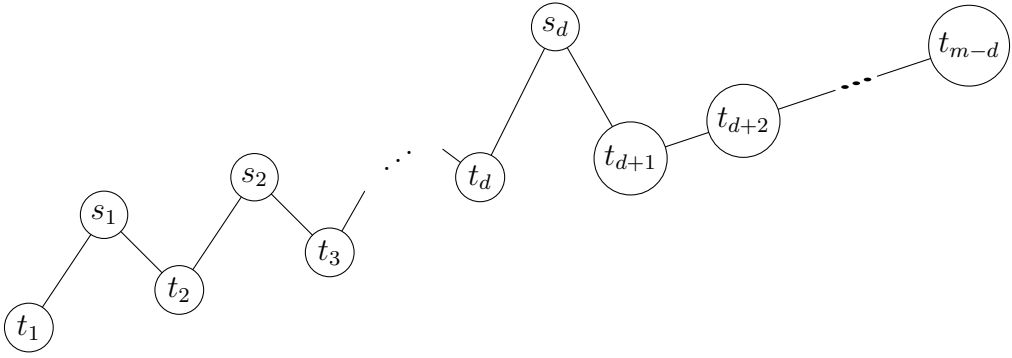
\begin{figure}[htbp]
\[
 \begin{tikzpicture}[yscale=.5]
  \draw (1,1) node[circle, draw=black, fill=white, inner sep=2] {$t_1$} 
  -- (2,4) node[circle, draw=black, fill=white, inner sep=2] {$s_1$} 
  -- (3,2) node[circle, draw=black, fill=white, inner sep=2] {$t_2$} 
  -- (4,5) node[circle, draw=black, fill=white, inner sep=2] {$s_2$}
  -- (5,3) node[circle, draw=black, fill=white, inner sep=2] {$t_3$} -- (5.5,4.75);
  \draw (5.9,5.5) node[circle, draw=white, fill=white, inner sep=4] {\reflectbox{$^{\ddots}$}};
  \draw (6.5,5.75)
  -- (7,5) node[circle, draw=black, fill=white, inner sep=2] {$t_d$}
  -- (8,9) node[circle, draw=black, fill=white, inner sep=2] {$s_d$}
  -- (9,5.5) node[circle, draw=black, fill=white, inner sep=2] {$t_{d+1}$}
  -- (10.5,6.5) node[circle, draw=black, fill=white, inner sep=2] {$t_{d+2}$}
  -- (12,7.5) node[circle, draw=white, fill=white, inner sep=2] {\ \ \ \  }
  -- (13.5,8.5) node[circle, draw=black, fill=white, inner sep=2] {$t_{m-d}$};  
  \foreach \x in {(11.85,7.4),(12,7.5),(12.15,7.6)} {\fill \x circle (.05);}
 \end{tikzpicture}
\]
\caption{Canonical construction of $w_S$, a permutation having pinnacle set $ S=\{s_1 < s_2<\cdots < s_d\}$.}\label{fig:mountain}
\end{figure}

\begin{example}
The set $S=\{5,8,9\}$ is an admissible pinnacle set. To produce $w_S$, we first set $w(2)= 5$, $w(4)= 8$, and $w(6)= 9$. Next, we position the values $\{1,2,3,4,6,7\}$ in increasing order, yielding $w = 152839467$.
\end{example}

Let us now clearly state and prove our assertion about $w_S$.

\begin{prop}[Canonical permutation with a given pinnacle set]
Let $S$ be an admissible pinnacle set with maximum $m$, and let $w_S \in S_m$ be as defined in Equation~\eqref{eqn:canonical perm with given pinnacle set}. Then $\Pin(w_S) = S$.
\end{prop}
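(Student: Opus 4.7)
The plan is to verify directly from the explicit formula for $w_S$ that every element of $S$ sits at a peak and that no other value does. The key input is the inequality $s_j > 2j$ for every $j \in [d]$; once this is in hand, the alternating ``mountain range'' structure of $w_S$ forces the correct pinnacle set.

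First I would show $s_j > 2j$ for $j = 1, \ldots, d$ by induction on $d = |S|$ using Proposition~\ref{prop:admissible}: the condition $m > 2d$ gives $s_d > 2d$, and applying the inductive hypothesis to the admissible set $S \setminus \{m\}$ yields $s_j > 2j$ for $j < d$. I would then convert this bound into the inequality $t_{j+1} < s_j$: the number of elements of $[m] \setminus S$ smaller than $s_j$ is $(s_j - 1) - (j-1) = s_j - j \geq j+1$, so $t_{j+1} \leq s_j - 1 < s_j$, and a fortiori $t_j < s_j$.

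Next I would verify $S \subseteq \Pin(w_S)$. By construction, $s_j$ sits at position $2j$ with left neighbor $w_S(2j-1) = t_j$. Its right neighbor $w_S(2j+1)$ equals $t_{j+1}$ whether we fall in the odd-position case ($j \leq d-1$) or the tail case ($j = d$, where the third branch of~\eqref{eqn:canonical perm with given pinnacle set} gives $t_{(2d+1)-d} = t_{d+1}$). Both neighbors are strictly less than $s_j$ by the previous step, so $s_j$ is a pinnacle.

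Finally I would show $\Pin(w_S) \subseteq S$ by checking that no $t_i$ is a peak. For $1 \leq i \leq d$, the value $t_i$ sits at odd position $2i-1$: when $i = 1$ there is no left neighbor, and when $i \geq 2$ the right neighbor $s_i$ strictly exceeds $t_i$. For $i = d+1$, the value $t_{d+1}$ is at position $2d+1$ with left neighbor $s_d = m > t_{d+1}$. For $i \geq d+2$, the values $t_{d+1} < t_{d+2} < \cdots < t_{m-d}$ appear in increasing order at consecutive positions $2d+1, \ldots, m$, so each such $t_i$ has either a larger right neighbor or no right neighbor at all. The argument is essentially bookkeeping once the inequality $s_j > 2j$ is established, so the only mild obstacle is keeping the case analyses straight: distinguishing $j < d$ from $j = d$ when identifying the right neighbor of $s_j$, and distinguishing $i \leq d$ from $i > d$ when verifying that $t_i$ is not a peak.
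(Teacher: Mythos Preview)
Your argument follows essentially the same route as the paper's: establish $t_{j+1} < s_j$ (equivalently $s_j > 2j$), use it to see that each $s_j$ is flanked by smaller values $t_j$ and $t_{j+1}$, and then check that no $t_i$ can be a peak. The bookkeeping you describe is correct.

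There is, however, a circularity in how you justify the key inequality. You derive $s_j > 2j$ by invoking Proposition~\ref{prop:admissible}, but in the paper's logical development the proof of Proposition~\ref{prop:admissible} explicitly uses the present proposition: it appeals to the canonical permutation $w_S$ and the fact that $\Pin(w_S)=S$ in order to show that an admissible $S$ satisfies $m>2|S|$ and that $S\setminus\{m\}$ is admissible. So as written, your argument and the proof of Proposition~\ref{prop:admissible} lean on each other.

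The fix is to obtain $s_j > 2j$ directly from the definition of admissibility, which is what the paper does (it simply asserts that admissibility gives at least $j+1$ elements of $[m]\setminus S$ below $s_j$, without citing Proposition~\ref{prop:admissible}). One clean justification: choose any $w$ with $\Pin(w)=S$, and form the subword $w'$ of $w$ on the values $\{1,\ldots,s_j\}$. Each $s_k$ with $k\le j$ is a pinnacle of $w$, and both of its neighbors in $w$ are smaller than $s_k\le s_j$, hence survive into $w'$ as the same neighbors; thus $s_k$ remains a pinnacle of $w'$. So $w'\in S_{s_j}$ has at least $j$ peaks, and Lemma~\ref{lem:halfn} gives $s_j > 2j$. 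With this replacement your proof is complete and matches the paper's.
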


\begin{proof}
	Suppose $S$ is an admissible pinnacle set and $w_S$ is the permutation constructed above.
	Since $S$ is admissible, for each $i \le |S|$, there are at least $i+1$ elements of $[m] \setminus S$ that are less than $s_i$.
	So, the elements $t_1,\ldots,t_{i+1}$ will always be less than $s_i$.
	This implies that when $i$ is even and $i \leq 2d$, we have $w_{i-1}w_iw_{i+1} = t_js_jt_{j+1}$ for some $j$.
	Thus, $s_j \in \Pin(w_S)$ for each $j$, and moreover, $t_j, t_{j+1} \notin \Pin(w_S)$ since there cannot be two adjacent peaks. 
	Finally, observe that $w(2d+1) w(2d+2) \cdots w(m) = t_{d+1}t_{d+2}\cdots t_{m-d}$ is an increasing sequence, so none of $t_{d+1},t_{d+2},\ldots,t_{m-d}$ will appear in $\Pin(w_S)$.
\end{proof}

\begin{proof}[Proof of Proposition~\ref{prop:admissible}]
We proceed by induction on $d=|S|$.

First recall that $\emptyset$ is an admissible pinnacle set, since it is the pinnacle set for $12\dots n$.
Next, suppose that $|S| = 1$, meaning that $S = \{m\}$.
If $S$ is an admissible pinnacle set, then $S \setminus \{m\} = \emptyset$ is an admissible pinnacle set. In addition, recall from Example~\ref{ex:nonpin} that at least two smaller numbers must be adjacent to each pinnacle. As a result, 1 and 2 can never be pinnacles. So $m \geq 3 > 2|S|$.
For the converse, consider the permutation $\pi=12\cdots(m-2)(m)(m-1) \in S_m$ where $m\geq 3$. Notice that $\Pin(12\cdots(m-2)(m)(m-1) )=\{m\}$. Thus, the converse implication also holds.
Now, assume that for some $d \ge 1$, the result holds for any set of size $d$, and consider a set $S=\{s_1 < s_2 < \cdots < s_d < s_{d+1}\} \subseteq\{1,2,3,\ldots\}$ with maximal element $s_{d+1}=m$.
Set $S' := S \setminus \{m\}$.

Suppose, first, that $S$ is an admissible pinnacle set. Let $w_S$ be the canonical permutation described by Equation~\eqref{eqn:canonical perm with given pinnacle set}, for which $\Pin(w_S) = S$. Since $w_S \in S_m$, Lemma~\ref{lem:halfn} tells us $m > 2|S|=2(d+1)$. Moreover, if we remove $m=s_{d+1} = w(2(d+1))$  from $w_S$, then we are left with a permutation $w'$ with pinnacle set $S \setminus \{m\} = S'$. Thus $S'$ is an admissible pinnacle set.

Now suppose that $S'$ is an admissible pinnacle set and that $m > 2(d+1)$. We must show that $S$ is an admissible pinnacle set. The set $S'$ has size $d$, and maximal element $s_d < m=s_{d+1}$. As $S'$ is admissible, there is a permutation $w' \in S_{m-1}$ that has pinnacle set $S'$. Let $T = [m-1]\setminus S'$, the set of non-pinnacles in $w'$. Since we are assuming $m>2(d+1)$, we have $|T| = m-1-d > d+1$. There are only $d$ peaks in $w'$, hence, by the pigeonhole principle, at least two elements of $T$ appear consecutively in $w'$. Let $w \in S_m$ be the permutation obtained by inserting $m$ between these two consecutive elements of $T$. This yields a permutation $w$ for which $\Pin(w) = S' \cup \{m\} = S$. Hence $S$ is an admissible pinnacle set.
\end{proof}

\subsection{Enumeration of admissible pinnacle sets}

We now use our characterization of admissible pinnacle sets from Proposition~\ref{prop:admissible} to count these sets.

\begin{defn}
Given nonnegative integers $m$ and $d$, define
\[\pfrk(m;d)\]
to be the number of admissible pinnacle sets with maximum element $m$ and cardinality $d$, using the convention $\pfrk(0,0) = 1$.
\end{defn}

In Table \ref{tab:admissible} we provide the numbers $\pfrk(m;d)$ for small values of $m$ and $d$.
From our characterization of admissible pinnacle sets in Proposition~\ref{prop:admissible}, we have the following recurrence for the array:
\[
\pfrk(m;d) = \begin{cases}
\hspace{.35in} 1 & \text{if } m = d = 0,\\
\sum\limits_{k<m} \pfrk(k;d-1) & \text{if } m > 2d, \text{ and}\\
\hspace{.35in} 0 & \text{otherwise.}
\end{cases}
\]

\begin{table}[htbp]
\[
\begin{array}{c || c c c c c c | l}
 m & d = 0 & d=1 & d=2 & d=3 & d=4 & d = 5  & \mbox{row sums}\\
 \hline
 \hline
\raisebox{.1in}[.2in][.1in]{} - & 1 & & & & & & 1\\
\raisebox{.1in}[.2in][.1in]{} 1 & 0 & & & & & & 0 \\
\raisebox{.1in}[.2in][.1in]{} 2 & 0 & & & & & & 0 \\
\raisebox{.1in}[.2in][.1in]{} 3 & 0 & 1 & & & & & 1 = \binom{1}{1}\\
\raisebox{.1in}[.2in][.1in]{} 4 & 0 & 1 & & & & & 1 = \binom{2}{2}\\
\raisebox{.1in}[.2in][.1in]{} 5 & 0 & 1 & 2 & & & & 3 = \binom{3}{2}\\
\raisebox{.1in}[.2in][.1in]{} 6 & 0 & 1 & 3 & & & & 4 = \binom{4}{3}\\
\raisebox{.1in}[.2in][.1in]{} 7 & 0 & 1 & 4 & 5 & & & 10 = \binom{5}{3}\\
\raisebox{.1in}[.2in][.1in]{} 8 & 0 & 1 & 5 & 9 & & & 15 = \binom{6}{4}\\
\raisebox{.1in}[.2in][.1in]{} 9 & 0 & 1 & 6 & 14 & 14 & & 35 = \binom{7}{4}\\
\raisebox{.1in}[.2in][.1in]{} 10 & 0 & 1 & 7 & 20 & 28 & & 56 = \binom{8}{5}\\
\raisebox{.1in}[.2in][.1in]{} 11 & 0 & 1 & 8 & 27 & 48 & 42 & 126 = \binom{9}{5}\\
\raisebox{.1in}[.2in][.1in]{} 12 & 0 & 1 & 9 & 35 & 75 & 90 & 210 = \binom{10}{6}
\end{array}
\]
\caption{The number $\pfrk(m;d)$ of admissible pinnacle sets with maximum element $m$ and cardinality $d$.}\label{tab:admissible}
\end{table}

Notice in the table that the row sums (that is, $\sum_{d\geq 1} \pfrk(m;d)$) seem to equal
\[
 \binom{m-2}{\lfloor m/2\rfloor}.
\]
This fact always holds; we present this result now but will defer the proof until the next subsection (see Corollary~\ref{cor:count}(a)).

\begin{lemma}\label{lem:rowsums}
	For all $m \geq 1$,
	\[
		\sum_{d \geq 1} \pfrk(m;d) = \binom{m-2}{\lfloor m/2\rfloor}.
	\]
\end{lemma}

With Lemma~\ref{lem:rowsums} in hand, we can inductively compute the number of admissible pinnacle sets $S \subseteq [n]$ to be $\binom{n-1}{\lfloor (n-1)/2\rfloor}$. 
That is, if there are $\binom{n-1}{\lfloor (n-1)/2\rfloor}$ admissible pinnacle sets $S\subseteq [n]$ for some value of $n\geq 3$, then the number of admissible pinnacle sets $S \subseteq [n+1]$ would be
\begin{align*}
 1+\sum_{m=3}^{n+1} \binom{m-2}{\lfloor m/2\rfloor} &= \left(1+\sum_{m=3}^n \binom{m-2}{\lfloor m/2\rfloor}\right) + \binom{n-1}{\lfloor (n+1)/2 \rfloor},\\
 &=\binom{n-1}{\lfloor (n-1)/2 \rfloor} + \binom{n-1}{\lfloor (n+1)/2 \rfloor},\\
 &=\binom{n}{\lfloor n/2 \rfloor}.
\end{align*}

Indeed, this result is the assertion in the second half of Theorem \ref{thm:pinsets}.

The simplicity of this formula suggests a nice combinatorial explanation for the number of admissible pinnacle sets. Another nudge toward this combinatorial structure comes when we recognize that the numbers $\pfrk(m;d)$ satisfy a two-term recurrence for $m-1>2d$:
\begin{align*}
 \pfrk(m;d) &= \sum_{k<m} \pfrk(k;d-1),\\
  &=\pfrk(m-1;d-1)+\sum_{k<m-1} \pfrk(k;d-1),\\
  &=\pfrk(m-1;d-1) + \pfrk(m-1;d).
\end{align*}

In Table \ref{tab:admissible}, we see that the boundary cases for this recurrence appear to be Catalan numbers. That is,
\[\pfrk(2d+1;d) = C_d\]
for $d \geq 1$, where $C_d = \binom{2d}{d}/(d+1)$.
In fact, we prove that this observation holds in the following section.
This hints at a connection between admissible pinnacle sets and lattice paths, which we will introduce and examine more deeply in the next section. 
In particular, we will develop facts related to the enumeration of diagonal lattice paths and use them to count the number of admissible pinnacle sets.

\subsection{Diagonal Lattice Paths}

\begin{defn}
A \emph{diagonal lattice path} is a sequence of steps, composed of \emph{up-steps} $(1,1)$ and \emph{down-steps} $(1,-1)$.
\end{defn}

For fixed $n$, consider all paths from $(0,0)$ to $(n-1,1)$ if $n$ is even, or to $(n-1,0)$ if $n$ is odd.  Any such path takes $n-1$ steps, $\lfloor (n-1)/2 \rfloor$ of which are down-steps. Hence there are $\binom{n-1}{\lfloor (n-1)/2 \rfloor}$ such paths, which (we claim) is precisely the number of admissible pinnacle sets $S \subseteq [n]$. Recall that Catalan numbers count Dyck paths, i.e., diagonal lattice paths that never pass below the $x$-axis.

To convert a lattice path with $n-1$ steps into an admissible pinnacle set, we first label the steps of the path, from left to right, by $2,3,\ldots,n$. Then the labels of up-steps that are strictly below the $x$-axis and of down-steps that are weakly above the $x$-axis will form an admissible pinnacle set $S \subseteq [n]$. 

\begin{example}
The path shown in Figure~\ref{fig:diagonal lattice path and negative regions} corresponds to the set $\{4,6,12,13,19,20\}$.
\end{example}

In this section, our goal is to prove that this correspondence is a bijection between diagonal lattice paths and admissible pinnacle sets. In fact, we will refine the bijection to focus on the paths ending with a down-step, which correspond to pinnacle sets with a fixed maximum. To this end, consider diagonal lattice paths from $(0,0)$ to $(x,\epsilon_x)$ where $\epsilon_x \in \{1,2\}$ is determined by the parity of $x$.
For $x \in \mathbb{Z}$, set
\[
\epsilon_x = \begin{cases} 1 & \text{ if $x$ is odd, and}\\ 2 & \text{ if $x$ is even.}\end{cases}
\]
Clearly, appending a down-step to any such path yields a down-step that is weakly above the $x$-axis, and so in our correspondence will give a pinnacle set with maximum $x+2$.

\begin{figure}[htbp]
\begin{tikzpicture}[scale=.5]
\draw (-1,0) -- (19,0);
\draw (0,0) -- (2,-2) --node[scale=.75,midway,below right]{4} (3,-1) -- (4,-2) --node[scale=.75,midway,below right]{6} (5,-1) --(6,0) -- (7,-1) -- (10,2) --node[scale=.75,midway,above right]{12} (11,1) --node[scale=.75,midway,above right]{13} (12,0)-- (13,-1) -- (17,3) -- node[scale=.75,midway,above right]{19} (18,2)-- node[scale=.75,midway,above right]{20} (19,1);
\draw[red, ultra thick] (2,-2)--(3,-1);
\draw[red, ultra thick] (4,-2)--(5,-1);
\draw[red, ultra thick] (10,2)--(11,1);
\draw[red, ultra thick] (11,1)--(12,0);
\draw[red, ultra thick] (17,3)--(18,2);
\draw[red, ultra thick] (18,2)--(19,1);
\foreach \x in {(0,0), (1,-1), (2,-2), (3,-1), (4,-2), (5,-1), (6,0), (7,-1), (8,0), (9,1), (10,2), (11,1), (12,0), (13,-1), (14,0), (15,1), (16,2), (17,3), (18,2), (19,1)} {\fill \x circle (3pt);}
\end{tikzpicture}
\caption{A diagonal lattice path corresponding to the admissible pinnacle set $\{4,6,12,13,19,20\}$. This path has three negative regions.}\label{fig:diagonal lattice path and negative regions}
\end{figure}
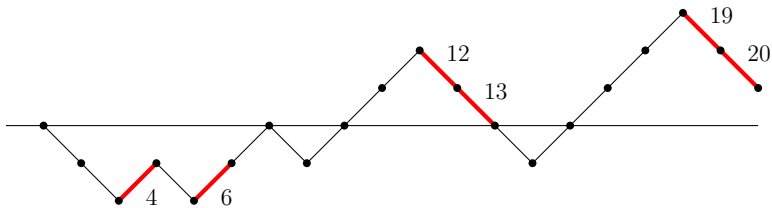

\begin{lemma}[Lattice path steps]\label{lem:number of up/down}
A diagonal lattice path from $(0,0)$ to $(x, \epsilon_x)$ has $\lfloor x/2 \rfloor + 1$ up-steps and $\lceil x/2 \rceil - 1$ down-steps.
\end{lemma}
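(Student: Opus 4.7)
The plan is to set up a linear system in the number of up-steps and down-steps, then verify the two parity cases for $x$.

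Let $u$ denote the number of up-steps and $v$ the number of down-steps in such a path. Since each up-step has horizontal component $+1$ and each down-step has horizontal component $+1$, the total horizontal displacement gives $u+v=x$. Since the vertical components are $+1$ and $-1$ respectively, the total vertical displacement gives $u-v=\epsilon_x$. Solving this system yields
\[
u = \frac{x+\epsilon_x}{2}, \qquad v = \frac{x-\epsilon_x}{2}.
\]

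Now I would check the two parity cases. If $x$ is odd, then $\epsilon_x=1$, so $u=(x+1)/2 = \lfloor x/2\rfloor + 1$ and $v=(x-1)/2 = \lceil x/2\rceil - 1$. If $x$ is even, then $\epsilon_x=2$, so $u=(x+2)/2 = x/2 + 1 = \lfloor x/2\rfloor+1$ and $v=(x-2)/2 = x/2 - 1 = \lceil x/2\rceil - 1$. In both cases the counts match those in the statement.

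There is no real obstacle here; the only subtlety is that the definition of $\epsilon_x$ is precisely what is needed so that $x+\epsilon_x$ and $x-\epsilon_x$ are both even (hence $u$ and $v$ are integers), and so that the two floor/ceiling expressions in the statement agree with $(x\pm\epsilon_x)/2$ in each parity. One could alternatively observe that $\epsilon_x = 2 - (x \bmod 2)$ and unify the two cases, but the case analysis above is the cleanest route.
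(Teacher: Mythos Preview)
Your proof is correct and follows essentially the same approach as the paper: both set up the equations $u+v=x$ and $u-v=\epsilon_x$, solve, and then verify the two parity cases.
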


\begin{proof}
Such a path $\Pcal$ consists of $x$ steps in total, and
\[\left|\{\text{up-steps in }\Pcal\}\right| - \left|\{\text{down-steps in }\Pcal\}\right| = \epsilon_x.\]
Thus, for odd $x$, there must be $(x+1)/2$ up-steps and $(x-1)/2$ down-steps. Similarly, for even $x$, there must be $x/2+1$ up-steps and $x/2-1$ down-steps.
\end{proof}

\begin{defn}
A \emph{negative region} in a diagonal lattice path begins with a down-step from a point $(x,0)$, terminates with an up-step to a point $(x',0)$, and does not touch the $x$-axis anywhere between those two points. The number of negative regions of a path $\Pcal$ will be denoted $\text{neg}(\Pcal)$.
\end{defn}

Figure~\ref{fig:diagonal lattice path and negative regions} depicts a diagonal lattice path $\Pcal$ for which $\text{neg}(\Pcal) = 3$.

\begin{lemma}[Sub-axis regions]\label{lem:number of neg regions}
For a diagonal lattice path $\Pcal$, 
\[\text{neg}(\Pcal) = \left| \left\{\text{down-steps in $\Pcal$ starting from the $x$-axis}\right\}\right|.\]
\end{lemma}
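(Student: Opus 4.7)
The plan is to prove the identity by exhibiting an explicit bijection between negative regions of $\Pcal$ and down-steps of $\Pcal$ that start on the $x$-axis.

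First I would define the forward map $\Phi$: given a negative region $R$ of $\Pcal$, let $\Phi(R)$ be the initial step of $R$. By the definition of a negative region, this initial step is a down-step from some point $(x,0)$, so $\Phi(R)$ is a down-step starting on the $x$-axis, and $\Phi$ is well defined. Injectivity is immediate: two distinct negative regions must begin at different $x$-coordinates (if they began at the same point they would consist of the same sequence of steps, since the region is determined by its starting point together with the path), so their initial down-steps differ.

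The main work is surjectivity. Suppose $s$ is a down-step of $\Pcal$ beginning at $(x,0)$ and ending at $(x+1,-1)$. I need to produce a negative region whose initial step is $s$. Since $\Pcal$ terminates at $(x_{\mathrm{end}},\epsilon_{x_{\mathrm{end}}})$ with $\epsilon_{x_{\mathrm{end}}} \in \{1,2\}$, the path is at height $-1$ at $x+1$ but ends at a positive height; because consecutive $y$-coordinates of a diagonal lattice path differ by exactly $1$, there must exist a smallest index $x' > x$ at which $\Pcal$ returns to height $0$. Between $x+1$ and $x'$ the path stays strictly below the $x$-axis by minimality of $x'$, and the step into $(x',0)$ must be an up-step (the only way to reach height $0$ from height $-1$). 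Thus the sub-path from $(x,0)$ to $(x',0)$ satisfies every clause in the definition of a negative region, and its initial step is $s$. Hence $s$ is in the image of $\Phi$.

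The main obstacle I anticipate is simply making sure the return point $x'$ exists; this is a mild intermediate-value argument that relies on the endpoint of $\Pcal$ lying weakly above the $x$-axis, a hypothesis built into the class of paths under consideration (endpoints $(x,\epsilon_x)$ with $\epsilon_x \geq 1$). Once existence of $x'$ is secured, the rest of the argument is bookkeeping, and the identity $\text{neg}(\Pcal) = |\{\text{down-steps of } \Pcal \text{ starting from the } x\text{-axis}\}|$ follows.
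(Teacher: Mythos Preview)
Your proof is correct and takes essentially the same approach as the paper: both identify a negative region with its initial (leftmost) down-step from the $x$-axis. Your argument is in fact more thorough than the paper's one-sentence proof, since you explicitly verify surjectivity and correctly note that this relies on the path ending at positive height---a standing assumption in this section that the paper's proof leaves implicit.
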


\begin{proof}
Negative regions can be identified uniquely by their leftmost step, which is necessarily a down-step from a point on the $x$-axis.
\end{proof}

Given a diagonal lattice path $\Pcal$, we define the \emph{marking} of $\Pcal$ to be the path obtained by marking all down-steps that are weakly above the $x$-axis and all up-steps that are strictly below the $x$-axis.
Examples of marked paths appear in Figures~\ref{fig:diagonal lattice path and negative regions} and~\ref{fig:paths and marked edges}, with marked edges colored in red.

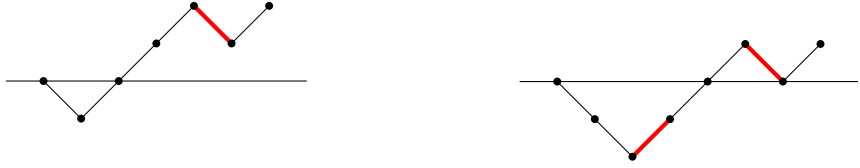
\begin{figure}[htbp]
\begin{tikzpicture}[scale=.5]
\draw (-1,0) -- (7,0);
\draw (0,0) -- (1,-1) -- (4,2) -- (5,1) -- (6,2);
\draw[red, ultra thick] (4,2) -- (5,1);
\foreach \x in {(0,0), (1,-1), (2,0), (3,1), (4,2), (5,1), (6,2)} {\fill \x circle (3pt);}
\draw[white] (0,-2) circle (3pt);
\end{tikzpicture}
\hspace{1in}
\begin{tikzpicture}[scale=.5]
\draw (-1,0) -- (8,0);
\draw (0,0) -- (2,-2) -- (5,1) -- (6,0) -- (7,1);
\draw[red, ultra thick] (2,-2) -- (3,-1);
\draw[red, ultra thick] (5,1) -- (6,0);
\foreach \x in {(0,0), (1,-1), (2,-2), (3,-1), (4,0), (5,1), (6,0), (7,1)} {\fill \x circle (3pt);}
\end{tikzpicture}
\caption{Two marked diagonal lattice paths.}
\label{fig:paths and marked edges}
\end{figure}

Now, given a marked path, we will use two sets defined in terms of its marked and unmarked edges.
Let $\Pcal$ be a marked diagonal lattice path starting at $(0,0)$ and having $x$ steps. Label the steps of the path, from left to right, by $\{2,3,\ldots,x+1\}$. Set
\begin{align*}
M(\Pcal) &= \{y : \text{the step labeled $y$ is marked}\} \cup \{x+2\}, \text{ and}\\
U(\Pcal) &= \{y : \text{the step labeled $y$ is unmarked}\} \cup \{1\}\\
&= [1,x+2] \setminus M(\Pcal).
\end{align*}

\begin{example}
For the leftmost path in Figure~\ref{fig:paths and marked edges}, $M(\Pcal) = \{6,8\}$ and $U(\Pcal) = \{1,2,3,4,5,7\}$. For the rightmost path in Figure~\ref{fig:paths and marked edges}, $M(\Pcal) = \{4,7,9\}$ and $U(\Pcal) = \{1,2,3,5,6,8\}$. 
\end{example}

It will transpire that the set $M(\Pcal)$ is a pinnacle set, and that the map $\Pcal \mapsto M(\Pcal)$ is a bijection. Before we can prove this, we elaborate on properties of diagonal lattice paths.

\begin{lemma}[Enumeration of marked edges]\label{lem:counting marked edges}
For a diagonal lattice path $\Pcal$ from $(0,0)$ to a point $(x,\epsilon_x)$,
\begin{align*}
|M(\Pcal)| &= \lceil x/2\rceil - \text{neg}(\Pcal), \text{ and}\\
|U(\Pcal)| &= \lfloor x/2\rfloor + \text{neg}(\Pcal) + 2.
\end{align*}
\end{lemma}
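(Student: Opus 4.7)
The plan is to decompose $\Pcal$ into its maximal weakly-above arcs alternating with its negative regions, count the marked edges in each piece, and then invoke Lemma~\ref{lem:number of up/down} to extract a closed form. Since $|M(\Pcal)| + |U(\Pcal)| = x + 2$ by definition, I only need to compute $|M(\Pcal)|$; the formula for $|U(\Pcal)|$ will then follow from $x - \lceil x/2\rceil = \lfloor x/2\rfloor$.

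First I will fix an arbitrary negative region of length $m_i$. By definition it opens with a down-step from height $0$ and closes with an up-step to height $0$, and every intermediate vertex lies at height at most $-1$. The opening down-step is not weakly above the $x$-axis, and the closing up-step is not strictly below it, so neither of these boundary steps is marked. Among the $m_i - 2$ interior steps, the down-steps fail to be weakly above (both endpoints lie below) while the up-steps qualify as strictly below, so the marked edges inside the region are exactly its interior up-steps. Balancing up- and down-steps within the region yields $m_i/2 - 1$ marked edges, which is one fewer than the number of down-steps in the region.

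Next I will handle the weakly-above arcs, where the count is immediate: every down-step is marked (both endpoints sit at height $\ge 0$) and no up-step can be strictly below the $x$-axis. Combining the two pieces, the total marked count equals (total down-steps) $- \text{neg}(\Pcal)$, since every down-step is marked except the first down-step of each negative region. Applying Lemma~\ref{lem:number of up/down} then gives a total of $(\lceil x/2\rceil - 1) - \text{neg}(\Pcal)$ marked edges, and adding $1$ for the element $x+2 \in M(\Pcal)$ yields the claimed expression for $|M(\Pcal)|$; subtracting from $x+2$ gives the expression for $|U(\Pcal)|$.

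The delicate point will be the case analysis at the boundary of a negative region: the opening down-step and closing up-step both touch the $x$-axis and must be excluded from the marked count, while the interior up-steps (which also sit below the axis) must be included. A one-step slip there would shift the answer by $\text{neg}(\Pcal)$ and break the final formula.
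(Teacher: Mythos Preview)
Your argument is correct and reaches the same intermediate identity as the paper---namely, that the number of marked steps equals the total number of down-steps minus $\text{neg}(\Pcal)$---but by a somewhat different route. You decompose $\Pcal$ into weakly-above arcs and negative regions and count marked edges piece by piece; the paper instead pairs each marked up-step (one strictly below the axis) with the nearest preceding down-step at the same height, which is necessarily an unmarked down-step, and thereby matches marked up-steps bijectively with the down-steps that start strictly below the axis. Both methods are short; the paper's pairing is a touch slicker in that it avoids tracking the region lengths $m_i$, while your decomposition makes the boundary cases (opening down-step, closing up-step) fully explicit.

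One wording caveat: your clause ``since every down-step is marked except the first down-step of each negative region'' is not literally true---\emph{no} down-step inside a negative region is marked, not just the opening one. What your preceding computation actually established is that the \emph{number} of marked steps in each negative region equals the number of down-steps there minus one, so summing over the pieces gives the stated total. The arithmetic is right; only that verbal shortcut misstates which steps carry the marks.
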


\begin{proof}
Two types of steps get marked in $\Pcal$: down-steps that lie weakly above the $x$-axis, and up-steps that lie strictly below the $x$-axis. Each step from $(a,b)$ to $(a+1,b+1)$ in the latter category can be paired, injectively, with the down-step from $(a',b+1)$ to $(a'+1,b)$ where $a'$ is the largest possible value less than $a$; this down-step is necessarily unmarked because it must lie below the $x$-axis. Thus the number $|M(\Pcal)| - 1$ of marked steps in $\Pcal$ is equal to the number of down-steps in $\Pcal$ that do not start at the $x$-axis. Lemmas~\ref{lem:number of up/down} and~\ref{lem:number of neg regions} complete the calculation. To compute $|U(\Pcal)|$, note that the number of steps, $x$, in $\Pcal$ is precisely $(|M(\Pcal)| - 1) + (|U(\Pcal)| - 1)$.
\end{proof}

In particular, Lemma~\ref{lem:counting marked edges} shows that $|M(\Pcal)| < |U(\Pcal)|$ for all diagonal lattice paths $\Pcal$.
The elements of the sets $M(\Pcal)$ and $U(\Pcal)$ bear some relation to each other.

\begin{lemma}[Labels of marked edges]\label{lem:marked labels are larger than unmarked}
Fix a diagonal lattice path $\Pcal$ from $(0,0)$ to a point $(x,\epsilon_x)$, and index the elements $\{m_i\}$ of $M(\Pcal)$ and $\{u_i\}$ of $U(\Pcal)$ in increasing order. Then
\[m_i > u_{i+1}\]
for all $m_i \in M(\Pcal)$.
\end{lemma}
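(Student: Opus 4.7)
The plan is to reformulate the desired inequality in terms of a single running statistic whose positivity can be tracked along the path. Set
\[
D_y := |U(\Pcal) \cap \{1,\ldots,y\}| - |M(\Pcal) \cap \{1,\ldots,y\}|, \qquad y \in \{1,\ldots,x+2\}.
\]
Then $D_1 = 1$ (since $1 \in U(\Pcal)$), and $D$ changes by $\pm 1$ at each subsequent $y$ depending on whether $y$ lies in $U(\Pcal)$ or $M(\Pcal)$. For $m_i \in M(\Pcal)$, the inequality $u_{i+1} < m_i$ is equivalent to $|U(\Pcal) \cap \{1,\ldots,m_i-1\}| \geq i+1$, which translates to $D_{m_i-1} \geq 2$, equivalently $D_{m_i} \geq 1$. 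Because $D$ can only decrease at labels in $M(\Pcal)$, it suffices to prove $D_y \geq 1$ for every $y$.

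To bound $D$ from below, I would decompose $\Pcal$ at its touches of the $x$-axis. Between two consecutive touches (and from the last touch to the endpoint of $\Pcal$), the path forms either a \emph{positive arc} that begins and ends on the axis with interior heights at least $1$, or a \emph{negative arc} consisting of a down-step from the axis, a subpath with heights at most $-1$, and an up-step returning to the axis. A four-case inspection of the marking rule shows that every step in a positive arc satisfies $\Delta D = \Delta h$, while every step in a negative arc \emph{other than} the final returning up-step satisfies $\Delta D = -\Delta h$, and the final returning up-step of a negative arc contributes $\Delta D = +1$.

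Assembling these local identities, if $y_0$ is the label at the start of an arc, then in a positive arc $D_y - D_{y_0} = h_{y-1} \geq 0$; in the body of a negative arc $D_y - D_{y_0} = -h_{y-1} = |h_{y-1}| \geq 1$; and a complete negative arc contributes a net $D_y - D_{y_0} = +2$. Since $D_1 = 1$, induction across the arc decomposition yields $D_y \geq 1$ throughout $\Pcal$. The path ends in a positive arc because $\epsilon_x \geq 1$, forcing $D_{x+1} \geq 2$ and thus $D_{x+2} = D_{x+1} - 1 \geq 1$.

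The main obstacle is the four-way case verification of the $\Delta D$ identities in terms of $\Delta h$, particularly for the transitional steps (the down-step from the axis initiating a negative arc and the up-step returning to the axis), where the marking regime flips. Handled carefully, these transitional steps are exactly what make a complete negative arc contribute a net gain of $+2$ to $D$, which is what ensures the invariant $D_y \geq 1$ survives every excursion into the strictly negative region.
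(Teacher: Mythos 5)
Your argument is correct, but it takes a genuinely different route from the paper. The paper's proof is a one-step injective pairing: each marked step (a down-step weakly above the axis, or an up-step strictly below it) is matched with the nearest preceding step occupying the same pair of heights, which is necessarily unmarked; together with $u_1 = 1$ this immediately puts $i+1$ elements of $U(\Pcal)$ below $m_i$. You instead introduce the ballot-type running count $D_y = |U(\Pcal)\cap[1,y]| - |M(\Pcal)\cap[1,y]|$, correctly reduce the lemma to the invariant $D_y \geq 1$, and establish that invariant by decomposing $\Pcal$ at its axis touches and checking the local identities $\Delta D = \Delta h$ on positive arcs and $\Delta D = -\Delta h$ on negative arcs (apart from the returning up-step); I verified the four cases, including the transitional steps occupying heights $[-1,0]$, which are never marked, and your treatment of the final incomplete positive arc, which is what delivers $D_{x+1}\geq 2$ and hence $D_{x+2}\geq 1$ for the top element $x+2$ of $M(\Pcal)$. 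The paper's argument is shorter, and is cleaner for a reader who just wants the inequality; yours is more mechanical but buys more: the net contribution of $+2$ per negative region plus $\epsilon_x$ for the final arc gives $D_{x+2} = 2\,\mathrm{neg}(\Pcal) + \epsilon_x = |U(\Pcal)| - |M(\Pcal)|$, so your computation recovers the counts of Lemma~\ref{lem:counting marked edges} as a byproduct, and it handles the element $x+2 \in M(\Pcal)$ (to which the paper's step-pairing does not literally apply) without any appeal to that lemma. The only blemishes are cosmetic: a slight off-by-one looseness in writing $D_y - D_{y_0} = h_{y-1}$ (labels sit on steps, heights on lattice points), and the four-case inspection is described rather than written out, but its stated conclusions are all correct.
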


\begin{proof}
First recall that $u_1 = 1$ by construction, and the step labels in $\Pcal$ begin with $2$. Each marked step in $\Pcal$ corresponds to a preceding (and hence smaller-labeled) unmarked step; namely, the nearest-to-the-left step of the same height. Thus $m_i > u_{i+1}$ for all $m_i \in M(\Pcal)$.
\end{proof}

\begin{prop}[Diagonal lattice paths construct admissible pinnacle sets]\label{prop:path yields admissible pinnacle set}
Fix a diagonal lattice path $\Pcal$ from $(0,0)$ to a point $(x,\epsilon_x)$. The set $M(\Pcal)$ is an admissible pinnacle set. 
\end{prop}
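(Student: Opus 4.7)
The plan is to invoke the recursive characterization of admissible pinnacle sets from Proposition~\ref{prop:admissible}. Iterating that characterization shows that a set $S = \{s_1 < s_2 < \cdots < s_d\}$ is admissible if and only if $s_i > 2i$ for every $i \in \{1,\ldots,d\}$ (the case $i=d$ is the second clause of the proposition applied to $S$ itself, and the cases $i<d$ come from applying the first clause repeatedly). Writing $M(\Pcal) = \{m_1 < m_2 < \cdots < m_d\}$ so that $m_d = x+2$ by construction, my goal reduces to verifying $m_i > 2i$ for each $i \in [d]$.

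The key ingredient is Lemma~\ref{lem:marked labels are larger than unmarked}, which supplies the inequality $m_i > u_{i+1}$. To turn this into $m_i > 2i$, I count how the integers in $\{1, 2, \ldots, m_i - 1\}$ distribute between $M(\Pcal)$ and $U(\Pcal)$: this initial segment contains exactly $i-1$ marked labels (namely $m_1, \ldots, m_{i-1}$) and, by the lemma, at least $i+1$ unmarked labels (namely $u_1, \ldots, u_{i+1}$, each strictly less than $m_i$). Since $M(\Pcal)$ and $U(\Pcal)$ partition $\{1,\ldots,x+2\}$, this forces $m_i - 1 \geq (i-1) + (i+1) = 2i$, and the desired strict inequality $m_i > 2i$ follows.

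The conclusion is then immediate from Proposition~\ref{prop:admissible}. I do not anticipate any serious obstacle here: the substantive combinatorial content has already been packaged into Lemma~\ref{lem:marked labels are larger than unmarked}, and this proposition simply harvests it. The one small conceptual step is recognizing that the recursive criterion in Proposition~\ref{prop:admissible}, unfolded over $|S|$, collapses into the uniform inequality $s_i > 2i$, which then matches the output of the lemma on the nose.
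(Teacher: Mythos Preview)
Your argument is correct. Both your proof and the paper's rely on Lemma~\ref{lem:marked labels are larger than unmarked} as the essential combinatorial input, but you finish differently. The paper proceeds constructively: it writes down the explicit permutation $u_1 m_1 u_2 m_2 u_3 \cdots$ and observes, using $m_i > u_{i+1} > u_i$, that its pinnacle set is precisely $M(\Pcal)$, thereby witnessing admissibility directly. You instead unfold the recursive criterion of Proposition~\ref{prop:admissible} into the equivalent condition $s_i > 2i$ for all $i$, and then verify this inequality by a short counting argument on how $\{1,\ldots,m_i-1\}$ splits between $M(\Pcal)$ and $U(\Pcal)$. Your route has the mild advantage of making the link to the characterization in Proposition~\ref{prop:admissible} explicit, while the paper's route has the advantage of producing a concrete permutation (indeed, essentially the canonical permutation $w_{M(\Pcal)}$) without appealing to that proposition at all.
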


\begin{proof}
Index the elements $\{m_i\}$ of $M(\Pcal)$ and $\{u_i\}$ of $U(\Pcal)$ in increasing order and consider the permutation
\[u_1m_1u_2m_2u_3m_3u_4 \cdots u_dm_du_{d+1}u_{d+2}\cdots\]
where $d= |M(\Pcal)|$.
By Lemma~\ref{lem:marked labels are larger than unmarked}, $m_i > u_{i+1}$. Moreover, the elements of $U(\Pcal)$ are indexed in increasing order, so $u_{i+1} > u_i$, and $m_i > u_i$ by transitivity. Thus the pinnacle set of this permutation is exactly $M(\Pcal)$.
\end{proof}

\begin{example}
For the leftmost path in Figure~\ref{fig:paths and marked edges}, the permutation produced by Proposition~\ref{prop:path yields admissible pinnacle set} is $16283457 \in S_8$. For the rightmost path in Figure~\ref{fig:paths and marked edges}, the permutation is $142739568 \in S_9$.
\end{example}

We now show that the mapping from diagonal lattice paths to pinnacle sets, described in Proposition~\ref{prop:path yields admissible pinnacle set}, is invertible. Note that the pinnacle set described in Proposition~\ref{prop:path yields admissible pinnacle set} has size $\lceil x/2\rceil - \text{neg}(\Pcal)$, by Lemma~\ref{lem:counting marked edges}, and its maximum value is $x+2$. We will show that we can start with an arbitrary pinnacle set, of size $\lceil x/2\rceil - \text{neg}(\Pcal)$ and having maximum value $x+2$, and produce the corresponding diagonal lattice path from $(0,0)$ to the  point $(x,\epsilon_x)$.

\begin{defn}\label{defn:constructing the path from the pinnacle set}
Let $S$ be an admissible pinnacle set with $\max S = m$. 
Define the diagonal lattice path $\Pcal(S)$ as follows.
\begin{quote}
Start at the point $(x_0,y_0) := (m-2,\epsilon_m)$, with $S_0 := S$.\\
Set $S_1 := S_0 \setminus \{m\}$. \\
For $i$ from $1$ to $m-2$: \\ 
\hspace*{.3in} If $\max S_i = m - i$ then set $S_{i+1} := S_i \setminus \{m-i\}$ and:\\
\hspace*{.6in} If $y_{i-1} \ge 0$, then set $(x_i,y_i) := (x_{i-1} - 1, y_{i-1} +1)$.\\
\hspace*{.6in} Otherwise (that is, if $y_{i-1} < 0$), set $(x_i,y_i) := (x_{i-1} - 1, y_{i-1} -1)$.\\
\hspace*{.3in} Otherwise (that is, if $\max S_i \neq m - i$), then set $S_{i+1} := S_i$ and:\\
\hspace*{.6in} If $y_{i-1} \ge 0$, then set $(x_i,y_i) := (x_{i-1} - 1, y_{i-1} -1)$.\\
\hspace*{.6in} Otherwise (that is, if $y_{i-1} < 0$), set $(x_i,y_i) := (x_{i-1} - 1, y_{i-1} +1)$.
\end{quote}
\end{defn}

Consider the admissible pinnacle set $S = \{4,7,9\}$, for which $m = \max S = 9$. The procedure described in Definition~\ref{defn:constructing the path from the pinnacle set} produces the following data.
\[\begin{array}{c||c|c|c|c|c|c|c|c}
i & 0 & 1 & 2 & 3 & 4 & 5 & 6 & 7\\
\hline
\raisebox{.1in}[.2in][.1in]{} S_i & \{4,7,9\} & \{4,7\} & \{4,7\} & \{4\} & \{4\} & \{4\} & \emptyset & \emptyset \\
\hline
\raisebox{.1in}[.2in][.1in]{} (x_i, y_i) & (7,1) & (6,0) & (5,1) & (4,0) & (3,-1) & (2,-2) & (1,-1) & (0,0)
\end{array}\]
The path described by this data is the rightmost path depicted in Figure~\ref{fig:paths and marked edges}.

We will show that this map $S \mapsto \Pcal(S)$, from admissible pinnacle sets to paths, is the inverse of the map $\Pcal \mapsto M(\Pcal)$. First, however, we must show that the diagonal lattice path $\Pcal(S)$ of Definition~\ref{defn:constructing the path from the pinnacle set} is, in fact, the kind of path we want to work with; namely, that its left endpoint is $(0,0)$.

\begin{lemma}[Endpoint of pinnacle-created paths] 
For any admissible pinnacle set $S$, the left endpoint of the diagonal lattice path $\Pcal(S)$ is $(0,0)$.
\end{lemma}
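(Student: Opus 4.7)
The plan is to verify $y_{m-2}=0$; the $x$-coordinate is automatic since each stage of the construction decreases $x$ by $1$, giving $x_{m-2}=(m-2)-(m-2)=0$. I would pin down $y_{m-2}$ via two independent constraints on its value: a parity condition and a reflected-walk confinement, which together force $y_{m-2}=0$.

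Each stage changes $y$ by $\pm 1$, so $y_{m-2}\equiv \epsilon_m+(m-2)\pmod 2$, and a check of both parities of $m$ shows this is always even. Hence it suffices to show $y_{m-2}\in\{0,-1\}$. To do this, I would introduce the nonnegative quantity $g_i := \max(y_i,\,-y_i-1)$, which vanishes precisely when $y_i\in\{0,-1\}$. A direct case check on the four branches of Definition~\ref{defn:constructing the path from the pinnacle set} should yield the reflected-walk recurrence
\[
  g_i = \max\!\bigl(g_{i-1}+\delta_i,\;0\bigr),
\]
where $\delta_i=+1$ if $m-i\in S$ (a processing stage) and $\delta_i=-1$ otherwise. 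With $g_0=\epsilon_m$ and $Z_i:=\epsilon_m+\sum_{k=1}^i\delta_k$, the classical identity $g_{m-2}=Z_{m-2}-\min(0,\min_{j\le m-2}Z_j)$ reduces the problem to showing that the unreflected walk $Z$ attains its minimum at $i=m-2$.

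The main obstacle is precisely this running-minimum claim, and it is where the admissibility hypothesis does the essential combinatorial work. Counting the $d-1$ processing stages gives $Z_{m-2}=\epsilon_m+2d-m$, which is $\le 0$ by $m>2d$ (with the parity refinement $m\ge 2d+2$ when $m$ is even). Letting $\alpha(j)$ denote the number of processing stages in the suffix $(j,m-2]$, the inequality $Z_j\ge Z_{m-2}$ rearranges to $j\le m-2-2\alpha(j)$. Because elements of $S\setminus\{m\}$ are processed in decreasing order of value, the $\alpha(j)$ processings lying after stage $j$ correspond exactly to the smallest elements $s_1<\cdots<s_{\alpha(j)}$ of $S\setminus\{m\}$; in particular $s_{\alpha(j)}$ is processed at a stage $m-s_{\alpha(j)}>j$, so $s_{\alpha(j)}\le m-j-1$. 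Admissibility in its equivalent form $s_\ell\ge 2\ell+1$ then yields $j\le m-s_{\alpha(j)}-1\le m-2-2\alpha(j)$, completing the estimate (the case $\alpha(j)=0$ being immediate). Hence $g_{m-2}=0$, so $y_{m-2}\in\{0,-1\}$, and the parity step forces $y_{m-2}=0$.
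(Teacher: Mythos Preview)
Your argument is correct and shares the paper's two-step skeleton: first confine $y_{m-2}$ to a small set near the line $y=-\tfrac12$ via the ``reflecting'' dynamics of Definition~\ref{defn:constructing the path from the pinnacle set}, then invoke parity to force $y_{m-2}=0$. The difference lies in how the confinement is justified. The paper argues informally that because there are more ``toward'' steps than ``away'' steps (i.e.\ $|[2,m-1]\setminus S|>|S\setminus\{m\}|$, equivalently $m>2d$), the endpoint must land in $\{-1,0\}$ or $\{-1,0,1\}$. As your analysis implicitly recognizes, a \emph{global} excess of toward-steps is not by itself sufficient once the walk can bounce off the barrier at $\{0,-1\}$: one really needs the suffix condition $Z_j\ge Z_{m-2}$ for every $j$, and this is precisely where the full strength of admissibility---$s_\ell\ge 2\ell+1$ for \emph{each} $\ell$, not merely $\ell=d$---enters. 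Your Lindley-type formulation $g_i=\max(g_{i-1}+\delta_i,0)$ together with the running-minimum identity makes this dependence explicit, and the chain $j\le m-s_{\alpha(j)}-1\le m-2-2\alpha(j)$ is the clean combinatorial payoff. So the two proofs are morally the same, but yours is more technical and supplies a quantitative step that the paper's prose leaves to the reader.
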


\begin{proof}
That the leftmost endpoint of $\Pcal(S)$ has $x$-coordinate $0$ is clear by construction. Now consider the $y$-coordinate of this point.

Let $\max S = m$. The path $\Pcal(S)$ has $m-2$ steps, constructed in Definition~\ref{defn:constructing the path from the pinnacle set} as $i$ ranges from $1$ to $m-2$. Recall from Proposition~\ref{prop:admissible} that $m \ge 2|S| + 1$.

Consider the right-to-left path construction described in Definition~\ref{defn:constructing the path from the pinnacle set}. Each element of $S \setminus \{m\}$ moves the path away from the line $y = -0.5$, whereas each element of $[2,m-1]\setminus S$ moves the path toward (and, if $y_{i-1} \in \{-1,0\}$, across) that line. We have an excess of steps moving toward this line because
\[ |[2,m-1] \setminus S| = |[2,m] \setminus S| \geq |S|  > |S \setminus \{m\}|,\]
so the path $\Pcal(S)$ terminates at $(0,y)$ with $y \in \{-1,0\}$ if $\epsilon_m = 1$, or $y \in \{-1,0,1\}$ if $\epsilon_m = 2$.

If $\epsilon_m = 2$, then $m$ is even and $\Pcal(S)$ is a path with an even number of steps. Thus the heights of its endpoints have the same parity. On the other hand, if $\epsilon_m = 1$, then $m$ is odd and $\Pcal(S)$ is a path with an odd number of steps, meaning that the heights of its endpoints have opposite parities. In either case, the leftmost height of $\Pcal(S)$ must be even, and the only available option is to land on the $x$-axis itself.
\end{proof}

We can now prove that the two maps discussed above, between pinnacle sets and diagonal lattice paths, are inverse of each other.

\begin{thm}[Bijection between admissible pinnacle sets and diagonal lattice paths]\label{thm:admissible pinnacle set yields path}
The map $S \mapsto \Pcal(S)$ from admissible pinnacle sets to diagonal lattice paths is the inverse of the map $\Pcal \mapsto M(\Pcal)$, and together these maps give a bijection between admissible pinnacle sets and diagonal lattice paths.
\end{thm}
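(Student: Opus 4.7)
The plan is to show that the two compositions $S \mapsto M(\Pcal(S))$ and $\Pcal \mapsto \Pcal(M(\Pcal))$ are both the identity. A useful preliminary observation, provable by a straightforward induction on $j$, is that the auxiliary sets appearing in Definition~\ref{defn:constructing the path from the pinnacle set} satisfy $S_j = S \cap [1, m - j]$ for every $j \geq 1$. In particular, the test ``$\max S_i = m - i$'' performed at the $i$-th iteration is equivalent to the condition $m - i \in S$, so each of the four branches of Definition~\ref{defn:constructing the path from the pinnacle set} is indexed by a pair: whether $m - i \in S$, and whether $y_{i-1} \geq 0$.

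To verify $M(\Pcal(S)) = S$, I would check, for $1 \leq i \leq m - 2$, that the step constructed at the $i$-th iteration of $\Pcal(S)$ is marked if and only if $m - i \in S$. Reading off $x$-coordinates, that step has left endpoint with $x$-coordinate $m - 2 - i$, so it carries the label $m - i$ in the labeling of $\Pcal(S)$. The verification is a short case analysis over the four branches of Definition~\ref{defn:constructing the path from the pinnacle set}: when $m - i \in S$ and $y_{i-1} \geq 0$, the resulting step is a down-step (left to right) with both endpoints having height $\geq 0$, hence weakly above the axis and marked; when $m - i \in S$ and $y_{i-1} < 0$, it is an up-step with both endpoints strictly negative, hence marked; and the two branches with $m - i \notin S$ produce steps that fail the corresponding marking condition by an analogous endpoint computation. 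Since $x + 2 = m$ is automatically adjoined to both $S$ and $M(\Pcal(S))$, this yields $M(\Pcal(S)) = S$.

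For the opposite direction, fix a diagonal lattice path $\Pcal$ from $(0,0)$ to $(x, \epsilon_x)$, set $S = M(\Pcal)$, and note that $m := \max S = x + 2$. I would induct on $i$, showing that after the $i$-th iteration of $\Pcal(S)$, the $i$ steps produced so far agree with the rightmost $i$ steps of $\Pcal$. The base case is immediate since $(x_0, y_0) = (m - 2, \epsilon_m) = (x, \epsilon_x)$ is the right endpoint of $\Pcal$. For the inductive step, the crucial observation is that the direction of any step in $\Pcal$ is determined by whether it is marked together with the height $h$ of its right endpoint: a marked step is a down-step when $h \geq 0$ and an up-step when $h < 0$, while an unmarked step is an up-step when $h \geq 0$ and a down-step when $h < 0$. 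Since this dichotomy is precisely what Definition~\ref{defn:constructing the path from the pinnacle set} encodes, the step placed at iteration $i$ matches the step of $\Pcal$ labeled $m - i$, completing the induction. The main obstacle is keeping the four-case bookkeeping straight and aligning the ``weakly above'' and ``strictly below'' conventions cleanly with the $y_{i-1} \geq 0$ versus $y_{i-1} < 0$ dichotomy in the construction; once these are matched up, the bijection drops out immediately.
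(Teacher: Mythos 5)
Your proof is correct and takes essentially the same approach as the paper's: identify the step constructed at iteration $i$ of Definition~\ref{defn:constructing the path from the pinnacle set} with the step labeled $m-i$, and check that the four branches of that construction encode exactly the marking rule (away from the line $y=-0.5$ means marked). In fact your write-up is more complete than the published proof, which only verifies $M(\Pcal(S)) = S$ and leaves the reverse composition implicit; your observation that a step's direction is determined by its marked status together with the height of its right endpoint supplies the other half of the bijection explicitly.
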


\begin{proof}
Let $S$ be an admissible pinnacle set. By the construction given in Definition~\ref{defn:constructing the path from the pinnacle set}, elements of $S \setminus \{\max S\}$ correspond to down-steps that are weakly above the $x$-axis and up-steps that are strictly below the $x$-axis in the resulting path (that is, steps that move away from the line $y = -0.5$). These are exactly the steps in a path that are marked by a lattice path marking, and which, together with $\max S$, constitute the set $M(\Pcal(S))$.
\end{proof}


We are now ready to prove the main result of this section.

\begin{thm}[Enumerating admissible pinnacle sets in terms of paths]\label{thm:path bijection}
For all $m,d \ge 1$, the number $\pfrk(m;d)$ of admissible pinnacle sets with maximum element $m$ and cardinality $d$ is
\[\pfrk(m;d) = \left|\left\{\begin{matrix} 
\text{diagonal lattice paths $\Pcal$ from $(0,0)$ to $(m-2,\epsilon_m)$},\\
\text{with } \text{neg}(\Pcal) = \lceil m/2\rceil - 1 - d \end{matrix}
\right\}\right|.\]
\end{thm}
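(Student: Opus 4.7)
The plan is to deduce this theorem almost immediately from two results already established: the bijection of Theorem~\ref{thm:admissible pinnacle set yields path} and the cardinality formula of Lemma~\ref{lem:counting marked edges}. All the substantive combinatorial work has been done; what remains is to match parameters on both sides.

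First, I would fix $m$ and restrict the bijection $\Pcal \leftrightarrow M(\Pcal)$ of Theorem~\ref{thm:admissible pinnacle set yields path} to the class of admissible pinnacle sets $S$ with $\max S = m$. From the construction in Definition~\ref{defn:constructing the path from the pinnacle set}, such an $S$ produces a path $\Pcal(S)$ starting at the right endpoint $(m-2,\epsilon_m)$ and ending (by the lemma on endpoints of pinnacle-created paths) at $(0,0)$; conversely, given a path $\Pcal$ from $(0,0)$ to $(m-2,\epsilon_m)$, the set $M(\Pcal)$ has maximum element $x+2 = m$ by construction. Hence the bijection restricts to a bijection between admissible pinnacle sets with maximum $m$ and diagonal lattice paths from $(0,0)$ to $(m-2,\epsilon_m)$.

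Next, I would translate the cardinality condition $|S| = d$ into a condition on the corresponding path. Under the bijection, $|S| = |M(\Pcal(S))|$, so I apply Lemma~\ref{lem:counting marked edges} with $x = m-2$. Since $\lceil (m-2)/2 \rceil = \lceil m/2 \rceil - 1$, the lemma gives
\[
|M(\Pcal)| \;=\; \lceil m/2 \rceil - 1 - \text{neg}(\Pcal).
\]
Setting $|M(\Pcal)| = d$ and solving yields $\text{neg}(\Pcal) = \lceil m/2 \rceil - 1 - d$, which is exactly the condition on the right-hand side of the theorem. Combining the two restrictions finishes the count.

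There is no real obstacle; the only thing to double-check is the parity bookkeeping in $\lceil (m-2)/2 \rceil = \lceil m/2 \rceil - 1$, which holds for all integers $m$, so the identity is valid regardless of whether $\epsilon_m = 1$ or $\epsilon_m = 2$.
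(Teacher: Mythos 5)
Your proposal is correct and follows essentially the same route as the paper: invoke the bijection between admissible pinnacle sets with maximum $m$ and diagonal lattice paths from $(0,0)$ to $(m-2,\epsilon_m)$, then apply Lemma~\ref{lem:counting marked edges} with $x = m-2$ and the identity $\lceil (m-2)/2\rceil = \lceil m/2\rceil - 1$ to translate $|S|=d$ into $\text{neg}(\Pcal) = \lceil m/2\rceil - 1 - d$. The parity check you flag is exactly the simplification the paper performs.
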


\begin{proof}
Proposition~\ref{prop:path yields admissible pinnacle set} and Theorem~\ref{thm:admissible pinnacle set yields path} give a bijection between admissible pinnacle sets with maximum element $m$ and diagonal lattice paths from $(0,0)$ to $(m-2,\epsilon_m)$. Let $S$ and $\Pcal$ be such a corresponding pair. By Lemma~\ref{lem:counting marked edges}, 
\begin{align*}
|S| &= |M(\Pcal)|\\
&= \lceil (m-2)/2 \rceil - \text{neg}(\Pcal)\\
&= \lceil m/2 \rceil - 1 - \text{neg}(\Pcal),
\end{align*}
which completes the proof.
\end{proof}

We now pause to demonstrate the bijection of Theorem~\ref{thm:admissible pinnacle set yields path}.

\begin{example}
The leftmost lattice path in Figure~\ref{fig:paths and marked edges} corresponds to the admissible pinnacle set $\{6,8\}$, counted by $\pfrk(8;2)$, and the rightmost path corresponds to the admissible pinnacle set $\{4,7,9\}$, counted by $\pfrk(9;3)$.
\end{example}

Because it is easy to count diagonal lattice paths between two fixed points, we can make the following enumerative corollaries. The former of these establishes the second half of Theorem~\ref{thm:pinsets} from Section~\ref{sec:intro}.
Meanwhile, the latter of these establishes the boundary case, discussed above, in the recursive expression for $\pfrk(m;d)$ when $m \geq 2d + 2$.

\begin{cor}[Enumerating admissible pinnacle sets] \label{cor:count} \
\begin{enumerate}\renewcommand{\labelenumi}{(\alph{enumi})}
\item The total number of admissible pinnacle sets (regardless of size) with maximum element $m$ is
\[\binom{m-2}{\lfloor m/2\rfloor}.\]
\item For $m=2d+1$, the number of admissible pinnacle sets with maximum element $m$ and size $d$ is the Catalan number $C_d = \binom{2d}{d}/(d+1)$.
\end{enumerate}
\end{cor}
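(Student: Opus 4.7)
The plan is to deduce both parts from the bijection of Theorem~\ref{thm:admissible pinnacle set yields path}, which identifies admissible pinnacle sets with maximum $m$ as exactly the diagonal lattice paths $\Pcal$ from $(0,0)$ to $(m-2,\epsilon_m)$. With that identification in hand, each part reduces to a simple path-counting exercise.

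For part (a), the total count is just the number of diagonal lattice paths from $(0,0)$ to $(m-2,\epsilon_m)$, with no restriction on negative regions. By Lemma~\ref{lem:number of up/down}, any such path has $m-2$ steps of which $\lfloor (m-2)/2\rfloor + 1$ are up-steps (and the remaining ones are down-steps), so the number of paths is $\binom{m-2}{\lfloor(m-2)/2\rfloor+1}$. I would then check by a short case analysis on the parity of $m$ (and the symmetry $\binom{n}{k}=\binom{n}{n-k}$) that this equals $\binom{m-2}{\lfloor m/2\rfloor}$: when $m$ is even we get $\binom{m-2}{m/2-1}$, which by symmetry equals $\binom{m-2}{m/2}$, and the odd case is analogous. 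I expect this parity check to be the only delicate (but entirely routine) bit.

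For part (b), I specialize $m=2d+1$, so $\epsilon_m=1$ and $\lceil m/2\rceil -1-d=0$. By Theorem~\ref{thm:path bijection}, admissible pinnacle sets with maximum $2d+1$ and size $d$ correspond to diagonal paths from $(0,0)$ to $(2d-1,1)$ having zero negative regions, i.e., paths that never pass below the $x$-axis. The natural bijection to standard Dyck paths is to append a down-step at the end, producing a path from $(0,0)$ to $(2d,0)$ that stays weakly above the $x$-axis — that is, a Dyck path of length $2d$. The appended step is forced (every Dyck path of length $2d$ ends with a down-step, so the inverse is to delete the last step), giving an exact bijection. The count is therefore $C_d=\binom{2d}{d}/(d+1)$.

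Overall, there is no serious obstacle: both statements follow essentially for free from the bijection already established in Theorem~\ref{thm:admissible pinnacle set yields path}, combined with Lemma~\ref{lem:number of up/down} for (a) and a standard append-a-step reduction to Dyck paths for (b).
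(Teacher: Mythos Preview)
Your proposal is correct and follows essentially the same route as the paper: part (a) via the bijection of Theorem~\ref{thm:admissible pinnacle set yields path} together with Lemma~\ref{lem:number of up/down}, and part (b) via Theorem~\ref{thm:path bijection} and the append-a-down-step bijection to Dyck paths. One small remark: your parity check in (a) contains an arithmetic slip (for even $m$ you have $\lfloor(m-2)/2\rfloor+1=m/2$, not $m/2-1$), but in fact $\lfloor(m-2)/2\rfloor+1=\lfloor m/2\rfloor$ holds directly for both parities, so no symmetry argument is even needed.
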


\begin{proof}
\ \begin{enumerate}\renewcommand{\labelenumi}{(\alph{enumi})}
\item This is simply the total number of diagonal lattice paths from $(0,0)$ to $(m-2, \epsilon_m)$. Each contains $m-2$ steps, of which $\lfloor m/2 \rfloor$ are up-steps, by Lemma~\ref{lem:number of up/down}.
\item By Theorem~\ref{thm:path bijection}, this is the number of diagonal lattice paths from $(0,0)$ to $(m-2, 1)$ that never go below the $x$-axis. Since every Dyck path must end with a down-step, these are in bijective correspondence with Dyck paths from $(0,0)$ to $(m-1,0)$, and Dyck paths are enumerated by the Catalan numbers. \qedhere
\end{enumerate}
\end{proof}

\begin{rem}[Alternate way to count admissible sets]\label{rem:alternate}
Bruce Sagan has suggested an alternate way to prove there are $\binom{n}{\lfloor n/2\rfloor}$ admissible sets $S\subseteq [n+1]$, using only induction and the boundary case mentioned in Corollary \ref{cor:count}(b). 

Consider the number of admissible sets with $\max S < n+1$. By induction these are counted by $\binom{n-1}{\lfloor (n-1)/2 \rfloor}$. Thus it suffices to show that the number of admissible sets with $\max S = n+1$ is given by
\[
\binom{n-1}{\lfloor (n+1)/2 \rfloor}=
 \begin{cases} 
 \binom{2k-1}{k} & \mbox{if } n=2k, \\
 \binom{2k-2}{k} & \mbox{if } n=2k-1.
 \end{cases}
\]

If $n=2k$, then for every admissible set $T$ with $\max T \leq n$, we claim the set $S=T\cup \{n+1\}$ is also admissible. To see this, consider the canonical permutation $w_T \in S_n$ from Equation \eqref{eqn:canonical perm with given pinnacle set}. Since $n$ is even, $w_T$ must end with an ascent. By inserting $n+1$ in this ascent position, we form a new pinnacle. Thus $S=T\cup \{n+1\}$ is admissible. Since $\binom{2k-1}{k} = \binom{2k-1}{k-1} = \binom{n-1}{\lfloor (n-1)/2\rfloor}$, the result follows.

Now suppose $n=2k-1$. The only way the previous construction (inserting $n+1$ in $w_T$) will not work is if $|T|=k-1$ is as large as possible. This is because if $\max S = n+1$, then $|S|<(n+1)/2=k$. Therefore, the number of viable such $T$ are those for which $|T|<k-1$. By induction, we have a total of $\binom{n-1}{\lfloor (n-1)/2\rfloor} = \binom{2k-2}{k-1}$ admissible $T \subseteq [n]$, so those with $|T|<k-1$ are given by
\[
 \binom{2k-2}{k-1} - |\{ T \mbox{ admissible} : \max T \leq 2k-1, |T|=k-1\}|. 
\]
Using the fact that the extremal admissible sets are counted by Catalan numbers (Corollary \ref{cor:count}(b)), this becomes
\[
 \binom{2k-2}{k-1} - \frac{1}{k}\binom{2k-2}{k-1}  = \binom{2k-2}{k},
\]
as desired.
\end{rem}

\section{Recurrences, explicit formulas, and bounds for $p_S(n)$}\label{sec:rec}

Now that we have characterized and enumerated admissible pinnacle sets, we turn to the question of counting permutations with a given pinnacle set. Recall that $p_S(n)$ denotes the number of permutations $w \in S_n$ with $\Pin(w) = S$.

To begin our study of $p_S(n)$, we make the easy observation that there are $2^{n-1}$ permutations in $S_n$ having no peaks; that is,
\[
 p_{\emptyset}(n) = 2^{n-1}.
\]
Indeed, if $\Pin(w)=\emptyset$, then we can write $w = u1v$, a concatenation of strings, where $u$ is a word whose letters are strictly decreasing and $v$ is a word whose letters are strictly increasing. If $w \in S_n$, then each such permutation is determined by the elements of $u$, which can be any subset of the $(n-1)$-element set $\{2,3,\ldots,n\}$.

A similar argument shows that when $S$ is nonempty, we can reduce to the case where $w \in S_t$ for any $t \in [\max S,n]$, because none of the letters $\{t+1,\ldots, n\}$ are pinnacles in $w$.

\begin{lemma}[Reduction of permutation size]\label{lem:null/reduction}
If $S$ is nonempty and $t \in [\max S,n]$, then
\[
 p_S(n) = 2^{n-t}p_S(t).
\]
For permutations with no pinnacles (nor peaks), we have $p_{\emptyset}(n) = 2^{n-1}$. 
\end{lemma}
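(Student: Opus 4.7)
The second statement, $p_\emptyset(n) = 2^{n-1}$, is already justified by the $u1v$ decomposition argument given in the paragraph preceding the lemma, so I would simply refer to that.

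For the first statement, the plan is to induct on $n - t$, with the trivial base case $n = t$. The entire content is the single-step recurrence
\[
 p_S(n+1) = 2\, p_S(n) \qquad \text{for } n \ge \max S,
\]
from which $p_S(n) = 2^{n-t}p_S(t)$ follows immediately.

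To establish the recurrence, I would analyze where the letter $n+1$ can sit in a permutation $w \in S_{n+1}$ with $\Pin(w) = S$. The key observation is twofold: (i) since $n+1 > n \ge \max S$, we must have $n+1 \notin S$, so $n+1$ is \emph{not} a pinnacle of $w$; and (ii) since $n+1$ is the largest letter, whenever it occupies a position with both a left and a right neighbor it is automatically flanked by strictly smaller values, hence is a peak. Combining (i) and (ii), the letter $n+1$ must sit at position $1$ or position $n+1$.

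This sets up a $2$-to-$1$ correspondence: the deletion map $w \mapsto w'$ (remove the letter $n+1$, leaving a permutation of $[n]$) is well-defined on $\{w \in S_{n+1} : \Pin(w)=S\}$, and its fibers have size exactly $2$, consisting of the two extensions obtained by prepending or appending $n+1$. I would verify that prepending or appending $n+1$ to any $w' \in S_n$ preserves the pinnacle set: position $1$ and position $n$ (respectively, $n+1$) are never peak positions, and for the adjacent position the relevant inequality with $n+1$ cannot create a new peak or destroy an existing one — this is where one must briefly note that $w'(1)$ can never have been a pinnacle to begin with and that $w'(n)$ similarly is not a pinnacle. Once this is checked, the $2$-to-$1$ correspondence yields $p_S(n+1) = 2\,p_S(n)$, and the induction closes. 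There is no real obstacle here; the only subtlety is remembering to use $n \ge \max S$ precisely at the step where we assert $n+1 \notin S$.
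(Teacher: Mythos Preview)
Your proposal is correct and uses essentially the same idea as the paper: the largest letters, being non-pinnacles, are forced to the two ends of the word. The only organizational difference is that the paper strips off all of $\{t+1,\dots,n\}$ at once, writing $w = u\,w'\,v$ with $w' \in S_t$, $u$ decreasing, and $v$ increasing, whereas you peel off a single largest letter and induct; the underlying observation and the resulting factor of $2^{n-t}$ are identical.
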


\begin{proof}
We prove only the first statement, as the case for $S = \emptyset$ was discussed above.

Suppose that $w \in S_n$ and $\Pin(w)=S$. Further suppose that $t \in [\max S,n]$. Because none of the letters $\{t+1,\ldots,n\}$ are pinnacles in $w$, we can write $w = uw'v$, a concatenation of strings, for some $w' \in S_t$ with $\Pin(w') = S$. Since the elements of $u$ and $v$ are drawn from the set $[n]\setminus [t]$, it must be the case that $u$ is a decreasing word and $v$ is increasing. Hence $w$ depends only on $w'$ and the set of elements in $u$. The set of elements in $u$ can be any subset of $[n]\setminus [t]$, yielding $2^{n-t}$ possibilities. The number of permutations $w' \in S_t$ having pinnacle set $S$ is, by definition, $p_S(t)$, and so $p_S(n) = 2^{n-t}p_S(t)$.
\end{proof}

In practice, we will most often employ Lemma~\ref{lem:null/reduction} with $t = \max S$ or $t = n-1$.

\subsection{A quadratic recurrence}\label{sec:quad}

Let us assume that $S$ is a nonempty admissible pinnacle set with $\max S = n$. To construct one of the permutations in $S_n$ counted by $p_S(n)$, we could first choose the elements that will appear to the left of $n$ and those that will appear to the right of $n$, and then try to arrange the letters on each side of $n$ in order to achieve our desired pinnacle set. To be more precise consider the following steps:
\begin{enumerate}
	\item Write $[n-1] = A \sqcup A^c$ as a disjoint union of nonempty sets.
	\item Let $I=S\cap A$ (pinnacles to appear to the left of $n$) and $J = S \cap A^c$ (pinnacles to appear to the right of $n$).
	\item If possible, form permutations $u$ of the set $A$ and $v$ of the set $A^c$, with $\Pin(u) = I$ and $\Pin(v) = J$.
	\item Let $w = u\,n\,v$, a concatenation of strings. Then $\Pin(w) = I\cup \{n\}\cup J =S$.
\end{enumerate}

We will now analyze the number of ways to perform this procedure.

\begin{defn}
The \emph{standardization map} relative to a set $X = \{x_1 < x_2 < \cdots\}$ is
\[\std_X(x_i) =i.\]
\end{defn}

Fix a nonempty set $A=\{ a_1 < a_2 < \cdots < a_{|A|}\} \subsetneq [n-1]$, and let
\[
I=\std_A(S) = \{ i : a_i \in S\}.
\]
In other words, $I$ is the set of relative values of pinnacles within the subset $A$. 

With this notation, the number of permutations $u$ of set $A$ such that $\Pin(u) = S'$ equals the number of permutations in $S_{|A|}$ with pinnacle set $I$. That is, the number of such $u$ is $p_I(|A|)$. Likewise, letting $J=\std_{A^c}(S)$ denote the set of relative values of the pinnacles within $A^c$, we have $p_J(|A^c|) = p_J(n-1-|A|)$ ways to form the permutation $v$.

Running over all cases of the set $A$, we get the following result.

\begin{prop}[The quadratic recurrence]\label{prp:quadratic}
Suppose that $S$ is a nonempty admissible pinnacle set with $\max S = n$. Then
\begin{equation}\label{eq:quad}
 p_S(n) = \sum_{\emptyset \neq A \subsetneq [n-1]} p_{\std_A(S)}(|A|) \cdot p_{\std_{A^c}(S)}(n-1-|A|).
\end{equation}
\end{prop}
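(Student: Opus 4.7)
The plan is to prove the recurrence by exhibiting a bijection between permutations $w \in S_n$ with $\Pin(w) = S$ and triples $(A, u, v)$, where $\emptyset \neq A \subsetneq [n-1]$, $u$ is a permutation of $A$ with $\Pin(u) = S \cap A$, and $v$ is a permutation of $A^c = [n-1]\setminus A$ with $\Pin(v) = S \cap A^c$. Standardization then converts the counts on arbitrary underlying sets into the quantities $p_{\std_A(S)}(|A|)$ and $p_{\std_{A^c}(S)}(n-1-|A|)$ appearing in the statement.

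For the correspondence, I would begin by noting that since $\max S = n$, the value $n$ must occur as a pinnacle of any $w \in S_n$ with $\Pin(w) = S$. Because $n$ is the maximum value and a pinnacle must lie strictly between two other entries, $n$ occupies some position $k$ with $2 \le k \le n-1$. Write $w = u\,n\,v$ as a concatenation of words, where $u = w(1)\cdots w(k-1)$ and $v = w(k+1)\cdots w(n)$, and let $A$ be the underlying set of $u$, so $A^c$ is the underlying set of $v$. Both $u$ and $v$ are nonempty, so $\emptyset \neq A \subsetneq [n-1]$. Conversely, any such triple $(A,u,v)$ reconstructs a permutation $w = u\,n\,v \in S_n$.

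Next I would verify that this correspondence is pinnacle-preserving. Positions $2, \ldots, k-2$ of $w$ share their neighboring triples with the corresponding interior positions of $u$, so they contribute the same pinnacles; the analogous statement holds for positions $k+2, \ldots, n-1$ and $v$. Position $k$ is a pinnacle of $w$ because $w(k) = n > w(k\pm 1)$. Positions $k-1$ and $k+1$ are not pinnacles of $w$ (their neighbor $w(k) = n$ is larger), and they are not counted as pinnacles of $u$ or $v$ either, since they are the endpoints of those words. Therefore $\Pin(w) = \Pin(u) \sqcup \{n\} \sqcup \Pin(v)$, so the condition $\Pin(w) = S$ is equivalent to $\Pin(u) = S \cap A$ together with $\Pin(v) = S \cap A^c$.

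Finally, the standardization map $\std_A$ is an order-preserving bijection from $A$ onto $[|A|]$, so it sends a permutation $u$ of $A$ bijectively to $\std_A(u) \in S_{|A|}$ while preserving the relative order of entries, giving $\Pin(\std_A(u)) = \std_A(\Pin(u))$. Thus the number of permutations of $A$ with pinnacle set $S \cap A$ equals $p_{\std_A(S \cap A)}(|A|) = p_{\std_A(S)}(|A|)$, since $n \notin A$; an identical argument handles $v$. Summing over all nonempty proper subsets $A \subsetneq [n-1]$ yields the recurrence. The only delicate step is the pinnacle bookkeeping at positions $k-1$, $k$, and $k+1$ needed to confirm that exactly one new pinnacle, namely $n$, is introduced when $u$ and $v$ are spliced together around the inserted value; everything else is routine.
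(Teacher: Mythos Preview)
Your proof is correct and follows essentially the same approach as the paper: both split a permutation $w$ with $\Pin(w)=S$ at the position of $n$, record the underlying set $A$ of the left word, and use standardization to count the arrangements on each side. Your version is in fact slightly more careful than the paper's in explicitly verifying the pinnacle bookkeeping at positions $k-1$, $k$, and $k+1$, which the paper leaves implicit.
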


This construction is illustrated in Figure \ref{fig:quadratic}, and we give a specific example below. We remark that the recursive structure inherent in the quadratic recurrence suggests that there might be a relationship between pinnacle sets and permutation pattern containment, perhaps for vincular patterns in particular. Indeed, a peak is exactly a $\underbracket[.5pt][1pt]{132}$ or $\underbracket[.5pt][1pt]{231}$ vincular pattern.

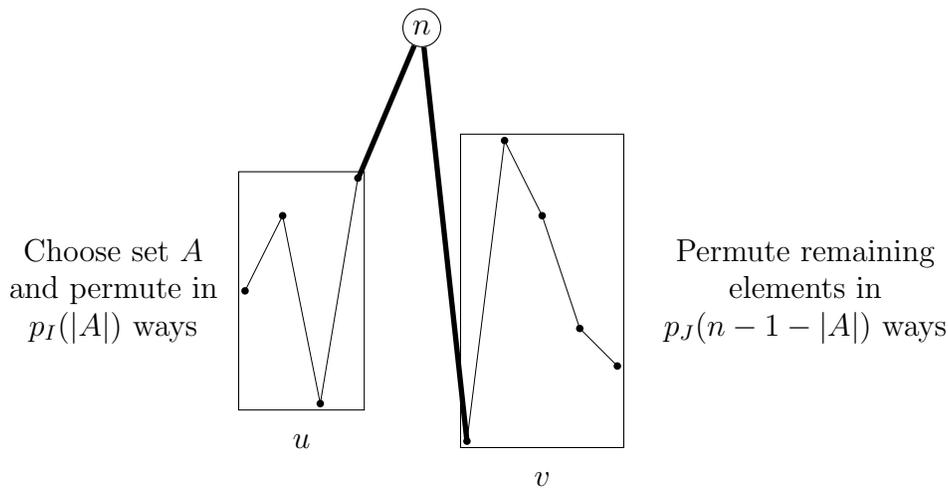
\begin{figure}[htbp]
\[
\begin{tikzpicture}[>=stealth,bend angle=45,auto,xscale=.4,yscale=.5]
\tikzstyle{state}=[circle,draw=black,inner sep=2]
\draw (0,10) node[state] (1) {$n$};
\draw (-4,3) node[rectangle, draw=black, inner sep=2,scale=.5] (2) {
\begin{tikzpicture}
\draw (0,4) node[fill=black,circle, inner sep =2] {} -- (1,6) node[fill=black,circle, inner sep =2] {} -- (2,1) node[fill=black,circle, inner sep =2] {} -- (3,7) node[fill=black,circle, inner sep =2] {}; 
\end{tikzpicture}
};
\draw[line width=2] (1)--(-2.11,6);
\draw[line width=2] (1)--(1.5,-1);
\draw (4,3) node[rectangle, draw=black, inner sep=2,scale=.5] (3) {
\begin{tikzpicture}
\draw (0,2) node[fill=black,circle, inner sep =2] {} -- (1,10) node[fill=black,circle, inner sep =2] {} -- (2,8) node[fill=black,circle, inner sep =2] {} -- (3,5) node[fill=black,circle, inner sep =2] {}-- (4,4) node[fill=black,circle, inner sep =2] {}; 
\end{tikzpicture}
};
\draw (2) node[xshift=-2.5cm] {\begin{tabular}{c} Choose set $A$ \\and permute in \\$p_I(|A|)$ ways\end{tabular}};
\draw (3) node[xshift=3.5cm] {\begin{tabular}{c} Permute remaining \\elements in \\$p_J(n-1-|A|)$ ways\end{tabular}};
\draw (2) node[yshift=-2cm] {$u$};
\draw (3) node[yshift=-2.5cm] {$v$};
\end{tikzpicture}
\]
\caption{Construction of the quadratic recurrence.}\label{fig:quadratic}
\end{figure}

\begin{example}\label{ex:quadratic recurrence}
Let $n =9$ and $S = \{4,7,9\}$. Then we would choose any proper nonempty subset of $[8]$, say $A=\{1,2,4\}$, so that $A^c = \{ 3,5,6,7,8\}$. Here,
\[
 I = \std_{\{1,2,4\}}(\{4,7,9\}) = \{3\},
\]
while 
\[
 J=\std_{\{3,5,6,7,8\}}(\{4,7,9\}) = \{4\},
\]
so this $A$ contributes a term of $p_{\{3\}}(3)p_{\{4\}}(5) = 2 \cdot 24 = 48$ to the computation of $p_{\{4,7,9\}}(9)$.
\end{example}

While it may seem that the quadratic recurrence must sum over $2^{n-1}-2$ subsets $A$, note that many of these selections contribute zero to the sum, because both $\std_A(S)$ and $\std_{A^c}(S)$ must themselves be admissible pinnacle sets.

\begin{example}
With the set $S = \{4,7,9\}$ of Example~\ref{ex:quadratic recurrence}, only $44$ of the possible $2^8-2 = 254$ summands in Equation~\eqref{eq:quad} are nonzero.
\end{example}

By combining Proposition \ref{prp:quadratic} with Lemma~\ref{lem:null/reduction}, we can obtain explicit formulas for pinnacle sets with one or two elements.

\begin{prop}\label{prp:12peak}
We have the following explicit formulas for admissible pinnacle sets with one or two elements. Let $3\leq l < m$. Then, for any $n\geq l$,
\begin{equation}\label{eq:onepeak}
 p_{\{l\}}(n) = 2^{n-2}(2^{l-2}-1)
\end{equation}
 and for any $n\geq m$,
\begin{equation}\label{eq:2peak}
p_{\{l,m\}}(n) = 2^{n+m-l-5}\left(3^{l-1}-2^l + 1\right) - 2^{n-3}(2^{l-2}-1).
\end{equation}
\end{prop}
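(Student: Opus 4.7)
My plan is to apply Lemma~\ref{lem:null/reduction} to reduce each identity to the case $n = \max S$, and then to evaluate the quadratic recurrence of Proposition~\ref{prp:quadratic} at that value. In both parts, the only restrictions $\std_A(S)$ that appear are empty or singletons, so the recurrence admits a closed-form evaluation.

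For \eqref{eq:onepeak}: since $l \notin [l-1]$, every proper nonempty $A \subseteq [l-1]$ satisfies $\std_A(\{l\}) = \std_{A^c}(\{l\}) = \emptyset$. Each term in the recurrence for $p_{\{l\}}(l)$ therefore equals $p_\emptyset(|A|) \cdot p_\emptyset(l-1-|A|) = 2^{l-3}$, and summing over the $2^{l-1}-2$ valid $A$ gives $p_{\{l\}}(l) = 2^{l-2}(2^{l-2}-1)$. Multiplying by $2^{n-l}$ via Lemma~\ref{lem:null/reduction} produces \eqref{eq:onepeak}.

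For \eqref{eq:2peak}: I would split the sum for $p_{\{l,m\}}(m)$ according to whether $l \in A$ or $l \in A^c$. The involution $A \leftrightarrow A^c$ pairs summands of equal value, so the two cases contribute equally and it suffices to compute the $l \in A$ contribution and double it. For $l \in A$, parameterize $A = B \cup \{l\} \cup C$ with $B \subseteq [l-1]$ and $C \subseteq [l+1,m-1]$, and set $b = |B|$, $c = |C|$. Then $\std_A(\{l,m\}) = \{b+1\}$ and $\std_{A^c}(\{l,m\}) = \emptyset$, so using the one-element formula just proven together with $p_\emptyset(k) = 2^{k-1}$, each term simplifies to $2^{m-4}(2^{b-1}-1)$, independent of $c$. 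Admissibility requires $b \ge 2$, and $A \ne [m-1]$ excludes only the single pair $(B,C) = ([l-1],[l+1,m-1])$. Summing $B$ and $C$ independently, correcting for that one excluded pair, and using the identity $\sum_{b=0}^{l-1}\binom{l-1}{b}2^b = 3^{l-1}$ reduces the internal sum to $(3^{l-1}-2^l+1)/2$, which after doubling and applying Lemma~\ref{lem:null/reduction} yields \eqref{eq:2peak}.

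The main obstacle I expect is the bookkeeping in the two-element case: enforcing $b \ge 2$ to secure admissibility of $\{b+1\}$, identifying the unique boundary pair $(B,C)$ that must be removed so that $A^c$ stays nonempty, and coordinating the binomial identities so that the final expression collapses to the compact form shown. Each step is routine in isolation, but the pieces must fit together cleanly for the closed form to emerge.
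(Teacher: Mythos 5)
Your proposal is correct and follows essentially the same route as the paper: reduce to $n=\max S$ via Lemma~\ref{lem:null/reduction}, evaluate the quadratic recurrence with all restrictions empty for the singleton case, and for $\{l,m\}$ use the $A\leftrightarrow A^c$ symmetry to double the $l\in A$ contribution, parameterized exactly as the paper does by the counts below $l$ and between $l$ and $m$, with the same single excluded boundary set and the same binomial identity. No gaps.
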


\begin{proof}
First consider a pinnacle set with one element, say $S = \{l\}$. Then Equation~\eqref{eq:quad} tells us that each nonempty set $A \subsetneq [l-1]$ contributes 
\[
p_{\emptyset}(|A|)p_{\emptyset}(l-1-|A|) = 2^{|A|-1}2^{l-2-|A|} = 2^{l-3},
\]
to the sum. As there are $2^{l-1}-2$ subsets $A$ to consider, we find that $p_{\{l\}}(l) = 2^{l-3}(2^{l-1}-2)=2^{l-2}(2^{l-2}-1)$. By Lemma~\ref{lem:null/reduction}, we see that for any $n\geq l \geq 3$, the number of permutations in $S_n$ with pinnacle set $\{l\}$ is
\[
 p_{\{l\}}(n) = 2^{n-2}(2^{l-2}-1),
\]
which proves Equation \eqref{eq:onepeak}.

Now to prove Equation \eqref{eq:2peak}, we suppose $w$ in $S_m$ with pinnacle set $\{l,m\}$, where $l < m$. We now analyze all sets $A$ that contribute to the sum of Equation \eqref{eq:quad}. 

First of all, notice we can count all possibilities where $l$ appears
to the left of $m$, i.e., where $l \in A$, and multiply by two. Thus, we assume $l\in A$ for the time being.

In order for set $A$ to form a permutation whose only pinnacle is $l$, i.e., for $I=\std_{A}(S)$ to be admissible, $A$ must contain at least two elements smaller than $l$. Let $j\geq 2$ denote the number of elements in $A$ smaller than $l$, so that $I = \{j+1\}$. Further, let $k$ denote the number of elements in $A$ that are bigger than $l$ and smaller than $m$.  Since $A^c$ is nonempty, we must have $0<m-1-|A|= m-2-j-k$, or $j+k < m-2$.

Given fixed $j \geq 2$ and $k$ as above, the number of ways to permute set $A$
to get a pinnacle set of $\{l\}$ is, by Equation \eqref{eq:onepeak},
\begin{align*}
p_{\{j+1\}}(|A|) &= p_{\{j+1\}}(k+j+1)\\
 &= 2^k p_{\{j+1\}}(j+1)\\
 &= 2^{k+j-1}(2^{j-1}-1).
\end{align*}

Since we don't want any pinnacles on the other side of $m$, i.e., since $J = \std_{A^c}(S) = \emptyset$, there are
\[
p_{\emptyset}(m-1-|A|) = 2^{m-3-j-k}
\]
ways to permute the elements on the right side of $m$.

Therefore the total contribution from set $A$ is
\begin{align*}
p_{\{j+1\}}(|A|)p_{\emptyset}(m-1-|A|) &=2^{m-3-j-k}2^{k+j-1}(2^{j-1}-1) \\
 &= 2^{m-4}(2^{j-1}-1).
\end{align*}
Notice that all that really matters here is $j$ (and not $k$).

It remains to describe how to count sets $A$ with these properties. First, there are $\binom{l-1}{j}$ ways to choose $j$ elements smaller than $l$. There are $\binom{m-1-l}{k}$ ways to choose $k$ elements greater than $l$ and less than $m$. Thus, summing over all $j$ and $k$ (and doubling to consider the possibility that $l\notin A$), we find
\[
p_{\{l,m\}}(m) = 2^{m-3}\sum_{\substack{ 2 \leq j \leq l-1 \\ 0\leq k\leq m-l-1\\ j+k < m-2}}\binom{l-1}{j}\binom{m-1-l}{k}(2^{j-1}-1).
\]
The condition that $j+k < m-2$ excludes only the case that $j=l-1$ and $k=m-1-l$, i.e., the case that $A^c$ is empty.

This means we can write
\begin{align*}
 p_{\{l,m\}}(m) &= 2^{m-3}\sum_{2 \leq j \leq l-1} \binom{l-1}{j}(2^{j-1}-1)\sum_{0\leq k\leq m-1-l} \binom{m-1-l}{k} - 2^{m-3}(2^{l-2}-1),\\
  &=2^{m-3}\sum_{2 \leq j \leq l-1} \binom{l-1}{j}(2^{j-1}-1)\cdot 2^{m-1-l} - 2^{m-3}(2^{l-2}-1),\\
  &=2^{2m-l-4}\sum_{2 \leq j \leq l-1} \binom{l-1}{j}(2^{j-1}-1) - 2^{m-3}(2^{l-2}-1).
\end{align*}
A bit of manipulation shows
\begin{align*}
2\cdot\sum_{2 \leq j \leq l-1} \binom{l-1}{j}(2^{j-1}-1) &= 1+\sum_{0\leq j\leq l-1} \binom{l-1}{j}(2^j-2)\\
&= 1 + \sum_{0\leq j\leq l-1}\binom{l-1}{j}2^j - 2\sum_{0\leq j \leq l-1} \binom{l-1}{j},\\
&=1 + 3^{l-1} - 2^l.
\end{align*}
Thus,
\[
p_{\{l,m\}}(m) =2^{2m-l-5}(3^{l-1}-2^l+1) - 2^{m-3}(2^{l-2}-1).
\]
Applying Lemma \ref{lem:null/reduction} yields \eqref{eq:2peak}, completing the proof.
\end{proof}

There may also be other special cases of explicit formulas that one can deduce from the quadratic recurrence, by exploring precisely which nonzero terms appear in the sum. For now, though, we turn to another recursive approach.

\subsection{A linear recurrence}

In this section, we present a different way to build from the case of a one-element pinnacle set to that of a two-element set. As before, suppose that $S=\{l,m\}$ with $l < m$.

Consider some $w \in S_m$ for which $\Pin(w) = \{l,m\}$, and let $w' \in S_{m-1}$ be the permutation obtained by deleting the letter $m$ from $w$. Then either $\Pin(w') = \{l\}$, or $\Pin(w') = \{j,l\}$ where $j$ was adjacent to $m$ in $w$. Thus, to evaluate $p_{\{l,m\}}(m)$, we should count such $w' \in S_{m-1}$ and the ways to insert $m$ appropriately. More precisely, we want permutations $u \in S_{m-1}$ with exactly one pinnacle, $\Pin(u) = l$, and permutations $v \in S_{m-1}$ with exactly two pinnacles, $\{j,l\}$.

Let $u \in S_{m-1}$ be a permutation with $\Pin(u) = \{l\}$. We want to insert the letter $m$ into $u$ to produce a permutation $w \in S_m$ having pinnacle set $S = \{l,m\}$. We cannot insert $m$ at either end of $u$ (because then $m$ would not be a pinnacle of $w$), nor on either side of $l$ in $u$ (because then $l$ would not be a pinnacle of $w$). Because $l$ is a pinnacle of $u$, this letter $l$ cannot appear at either end of the word $u$. Thus there are $m-4$ positions at which inserting $m$ into $u \in S_{m-1}$ will yield a permutation in $S_m$ having pinnacle set $\{l,m\}$. (This is depicted in Figure \ref{fig:insertl}.) The permutations constructed in this manner contribute
\[
 (m-4)p_{\{l\}}(m-1)
\]
to the count $p_{\{l,m\}}(m)$.

\begin{figure}[htbp]
\[
\begin{tikzpicture}[>=stealth,bend angle=45,auto,xscale=.4,yscale=.5]
\tikzstyle{state}=[circle,draw=black,inner sep=2]
\draw (1,10) node[draw=none] (1) {};
\draw (4,8) node[state] (2) {};
\draw (6,6) node[state] (3) {};
\draw (10,2) node[state] (4) {};
\draw (15,7) node[state] (5) {$l$};
\draw (18,4) node[state] (6) {};
\draw (21,1) node[state] (7) {};
\draw (23,3) node[state] (8) {};
\draw (25,5) node[state] (9) {};
\draw (29,9) node[state] (10) {};
\draw (31,10) node[draw=none] (11) {};
\draw (5,10) node[state] (m1) {$m$};
\draw (8,10) node[state] (m2) {$m$};
\draw (19.5,10) node[state] (m3) {$m$};
\draw (22,10) node[state] (m4) {$m$};
\draw (24,10) node[state] (m5) {$m$};
\draw (27,10) node[state] (m6) {$m$};
\draw[line width=2] (1)--(2);
\draw (2)--(3);
\draw (3)--(4);
\draw[line width=2] (5)--(6);
\draw (6)--(7);
\draw[line width=2] (4)--(5);
\draw (7)--(8);
\draw (8)--(9);
\draw (9)--(10);
\draw[line width=2] (10)--(11);
\draw[dashed] (2)--(m1)--(3);
\draw[dashed] (4)--(m2)--(3);
\draw[dashed] (6)--(m3)--(7);
\draw[dashed] (7)--(m4)--(8);
\draw[dashed] (8)--(m5)--(9);
\draw[dashed] (9)--(m6)--(10);
\end{tikzpicture}
\]
\caption{Insert a new highest peak in any of the gaps except those on the far left, far right, and adjacent to an existing peak.}\label{fig:insertl}
\end{figure}
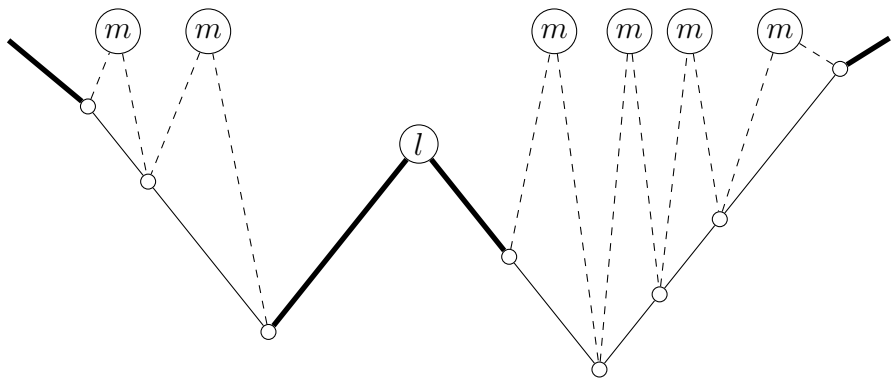

Now suppose that $v \in S_{m-1}$ is a permutation with pinnacle set $\Pin(v) = \{j,l\}$, where $l\neq j < m$. In this situation, if we place $m$ immediately to the left or right of $j$, then $j$ is no longer a pinnacle, but both $l$ and $m$ are pinnacles. Thus for each admissible pinnacle set $\{j,l\}$ with $j < m$, we have a contribution of $2p_{\{j,l\}}(m-1)$ as well.

Hence, applying Equation~\eqref{eq:onepeak} we get 
\begin{equation}\label{eq:twopeak}
 p_{\{l,m\}}(m) = (m-4)2^{m-3}(2^{l-2}-1) + 2\sum_{l\neq j < m} p_{\{j,l\}}(m-1).
\end{equation}

This line of reasoning can be generalized to sets $S=\{ s_1 < s_2 < \cdots < s_d\}$, with $s_d=m$. The analysis proceeds along the same steps as in the case $d=2$, which produced Equation~\eqref{eq:twopeak}, and applying Lemma~\ref{lem:null/reduction}.

\begin{prop}[A linear recurrence]\label{prp:linear}
Suppose that $S$ is an admissible pinnacle set with $|S|=d$ and $\max S =m$. Then for any $n\geq m$,
\begin{equation}
 p_S(n) = 2^{n-m}\left( (m-2d) p_{S \setminus \{m\}}(m-1) + 2\sum_{\substack{T = (S \setminus \{m\}) \cup \{j\} \\ j \in [m] \setminus S}} p_T(m-1) \right).  \label{eq:rec2}
\end{equation}
\end{prop}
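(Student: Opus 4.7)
The plan is to generalize the argument given just before the proposition for the case $d = 2$. First, by Lemma~\ref{lem:null/reduction}, for any $n \geq m$ we have $p_S(n) = 2^{n-m} p_S(m)$, which accounts for the prefactor $2^{n-m}$ in Equation~\eqref{eq:rec2}. So it suffices to establish the identity
\[
 p_S(m) = (m-2d)\, p_{S \setminus \{m\}}(m-1) + 2 \sum_T p_T(m-1),
\]
where the sum is over $T = (S \setminus \{m\}) \cup \{j\}$ with $j \in [m] \setminus S$.

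My approach is to count permutations $w \in S_m$ with $\Pin(w) = S$ by reconstructing each such $w$ from the permutation $w'' \in S_{m-1}$ obtained by deleting the letter $m$. Since $m$ is the maximum, if $m$ is inserted into $w''$ at one of the $m-2$ interior gaps, then $m$ becomes a pinnacle of the resulting $w$; if $m$ is placed at either end, it is not a pinnacle. Since $m \in \Pin(w)$, only interior insertions contribute. Suppose $m$ is placed between $w''(i)$ and $w''(i+1)$; then in $w$ both $w''(i)$ and $w''(i+1)$ acquire $m$ as a neighbor, and since $m$ exceeds both, neither can be a pinnacle of $w$. No other positions have their local structure changed, so pinnacles elsewhere are preserved and no new pinnacles are created besides $m$. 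Consequently,
\[
 \Pin(w) = \bigl(\Pin(w'') \setminus \{w''(i), w''(i+1)\}\bigr) \cup \{m\}.
\]

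Because pinnacles cannot be adjacent, at most one of $w''(i), w''(i+1)$ is a pinnacle of $w''$, which splits the count into two cases. In the first, neither is a pinnacle, so $\Pin(w'') = S \setminus \{m\}$; in the second, exactly one has some value $j$, so $\Pin(w'') = (S \setminus \{m\}) \cup \{j\}$ for some $j \in [m-1] \setminus S = [m] \setminus S$ (the equality holds since $m \in S$). For the second case, each pinnacle $j$ of $w''$ at position $p$ admits exactly the two valid gaps $i = p-1$ and $i = p$ that destroy the pinnacle by placing $m$ next to $j$, contributing the factor of $2$ in front of $\sum_T p_T(m-1)$.

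The step I expect to require the most care is counting good insertion gaps in the first case. Here $w''$ has $d - 1$ pinnacles, and each pinnacle at position $p$ rules out the two ``bad'' gaps $i = p - 1$ and $i = p$. I must verify that these bad gaps are distinct across different pinnacles: if pinnacles at positions $p < q$ shared a bad gap, then $\{p-1,p\} \cap \{q-1,q\} \neq \emptyset$ would force $q \leq p + 1$, contradicting the non-adjacency of peaks. Hence there are $2(d-1)$ distinct bad gaps out of $m - 2$ interior gaps, leaving $m - 2 - 2(d-1) = m - 2d$ good gaps per permutation $w''$ in the first case. Summing the two cases gives the stated identity for $p_S(m)$, and multiplying by $2^{n-m}$ completes the proof.
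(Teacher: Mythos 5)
Your proposal is correct and follows essentially the same route as the paper: reduce to $p_S(m)$ via Lemma~\ref{lem:null/reduction}, then count insertions of $m$ into permutations of $S_{m-1}$, splitting into the case $\Pin(w'')=S\setminus\{m\}$ with $m-2d$ admissible gaps and the case $\Pin(w'')=(S\setminus\{m\})\cup\{j\}$ with $2$ gaps adjacent to $j$. Your explicit check that the forbidden gaps contributed by distinct pinnacles are disjoint (via non-adjacency of peaks) is a detail the paper leaves implicit, but the argument is the same.
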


\begin{proof}
When deleting $m$ from a permutation $w$ with $\Pin(w)=S$, either we reduce the number of peaks by one (i.e., we have $u$ such that $\Pin(u) = S\setminus \{m\}$) or the resulting permutation has the same number of peaks (i.e., we have $v$ such that $\Pin(v) = (S\setminus\{m\}) \cup \{j\}$ for some $j < m$).

First, suppose that $u \in S_{m-1}$ is any permutation with $\Pin(u) = S \setminus \{m\}$, and insert $m$ into a gap of $u$ to form a permutation with pinnacle set $S$, as in Figure~\ref{fig:insertl}. The forbidden gaps are those at the far left end of $u$, at the far right end of $u$, and adjacent to any of the existing peaks. Since $u$ has $m-1$ letters, there are $m-2$ internal gaps, and since $u$ has $d-1$ peaks, we must avoid $2(d-1)$ of these. This leaves
\[
 (m-2)-2(d-1) = m - 2d
\]
gaps in which we can place $m$ to obtain a permutation $w \in S_{m}$ with $\Pin(w) = S$. In other words, the permutations constructed in this manner contribute
\[
(m - 2d)p_{S \setminus \{m\}}(m-1)
\]
to the count $p_S(m)$.

Next, suppose that $T = (S \setminus \{m\}) \cup \{j\}$ for some $j \in [m] \setminus S$. Let $v \in S_{m-1}$ have $\Pin(v) = T$. Then we can form a permutation with pinnacle set $S$ by inserting $m$ to the left or to the right of the letter $j$. This will mean that $j$ no longer sits at a peak, but $m$ does, as shown in Figure \ref{fig:deletej}.

Combining the two cases produces
\[
 p_S(m) = (m-2d) p_{S\setminus \{m\}}(m-1) + 2\sum_T p_T(m-1),
\]
where the sum is over all $T$ of the form $T=(S\setminus\{m\}) \cup \{j\}$ for some $j \in [m] \setminus S$. Lemma~\ref{lem:null/reduction} completes the proof.
\end{proof}

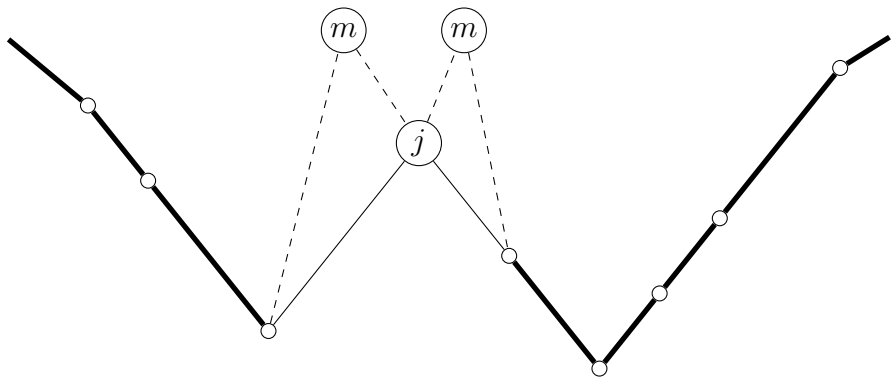
\begin{figure}[htbp]
\[
\begin{tikzpicture}[>=stealth,bend angle=45,auto,xscale=.4,yscale=.5]
\tikzstyle{state}=[circle,draw=black,inner sep=2]
\draw (1,10) node (1) {};
\draw (4,8) node[state] (2) {};
\draw (6,6) node[state] (3) {};
\draw (10,2) node[state] (4) {};
\draw (15,7) node[state] (5) {$j$};
\draw (18,4) node[state] (6) {};
\draw (21,1) node[state] (7) {};
\draw (23,3) node[state] (8) {};
\draw (25,5) node[state] (9) {};
\draw (29,9) node[state] (10) {};
\draw (31,10) node (11) {};
\draw (12.5,10) node[state] (m1) {$m$};
\draw (16.5,10) node[state] (m2) {$m$};
\draw[line width=2] (1)--(2);
\draw[line width=2] (2)--(3);
\draw[line width=2] (3)--(4);
\draw (5)--(6);
\draw[line width=2] (6)--(7);
\draw (4)--(5);
\draw[line width=2] (7)--(8);
\draw[line width=2] (8)--(9);
\draw[line width=2] (9)--(10);
\draw[line width=2] (10)--(11);
\draw[dashed] (4)--(m1)--(5);
\draw[dashed] (5)--(m2)--(6);
\end{tikzpicture}
\]
\caption{Inserting a new highest peak adjacent to an existing peak replaces that element of the pinnacle set. The element $j$ was in the original pinnacle set, but now it is replaced by $m$.}\label{fig:deletej}
\end{figure}

This linear recurrence is easily implemented; it is what was used to compute the formulas for some small sets $S$ in Table \ref{tab:lmformulas}.

\begin{table}[htbp]
\[
\begin{array}{c| c | c|c}
 S & p_S(n) & p_S(\max S) =p_S(n)/2^{n-\max S} & p_S(7)\\
 \hline
 \hline
 \emptyset & 2^{n-1} & - & 64\\
\{ 3\} & 2^{n-2} & 2 & 32\\
\{ 4\} & 3\cdot 2^{n-2}& 3\cdot 2^2 & 96\\
\{ 5\} & 7\cdot 2^{n-2} & 7\cdot 2^3 & 224\\
\{ 6\} & 5\cdot 3\cdot 2^{n-2} & 5\cdot 3\cdot 2^4 & 480 \\
\{ 7\} & 31\cdot 2^{n-2} & 31\cdot 2^5 & 992\\
\{ 3,5\} & 2^{n-3} & 2^2 & 16 \\
\{ 4,5\} & 3\cdot 2^{n-3} & 3\cdot 2^2 & 48\\
\{ 3,6\} & 3\cdot 2^{n-3} & 3\cdot 2^3 & 48\\
\{ 4,6\} & 3^2\cdot 2^{n-3} & 3^2\cdot 2^3 & 144\\
\{ 5,6\} & 3^2\cdot 2^{n-2} & 3^2 \cdot 2^4 & 288\\
\{ 3,7\} & 7\cdot 2^{n-3} & 7 \cdot 2^4 & 112\\
\{ 4,7\} & 7\cdot 3\cdot 2^{n-3} & 7\cdot 3\cdot 2^4 & 336\\
\{ 5,7\} & 43\cdot 2^{n-3} & 43\cdot 2^4 & 688\\
\{ 6,7\} & 5^2\cdot 3\cdot2^{n-3} & 5^2\cdot 3 \cdot 2^4 & 1200\\
\{ 3,5,7\} & 2^{n-4} & 2^3 & 8\\
\{ 3,6,7\} & 3\cdot 2^{n-4} & 3\cdot2^3 & 24\\
\{ 4,5,7\} & 3\cdot 2^{n-4} & 3\cdot 2^3 & 24\\
\{ 4,6,7\} & 3^2\cdot 2^{n-4} & 3^2\cdot 2^3 & 72\\
\{ 5,6,7\} & 3^2 \cdot 2^{n-3} & 3^2 \cdot 2^4 & 144
\end{array}
\]
\caption{Some formulas for admissible pinnacle sets with $\max S \leq 7$. The formulas are only valid when $n\geq \max S$. The rightmost column has each of these evaluated at $n=7$ for the sake of comparison.}\label{tab:lmformulas}
\end{table}

\subsection{Some formulas and bounds}\label{subsec: some formulas}

The previous discussion leads to a nice result on the bounds of $p_S(n)$. For instance, in Table \ref{tab:lmformulas} it seems that for fixed $d=|S|$, the pinnacle set that maximizes $p_S(n)$ is the one that consists of the largest $d$ elements in $[n]$ (that is, $S = \{n-d+1, n-d+2, \ldots, n\}$). 
In fact, this is true, and we have an explicit formula for $p_S(n)$ in this case.

We begin with the enumeration.

\begin{prop}[Enumerating permutations with maximal pinnacles]\label{prop:stirling formula}
Let $d$ and $n$ be any positive integers such that $2d < n$.
Then the number of permutations in $S_n$ with pinnacle set $[n-d+1,n]=\{n-d+1,n-d+2,\ldots,n\}$ is
\[p_{[n-d+1,n]}(n) = d!\cdot (d+1)! \cdot 2^{n-2d-1} \cdot S(n-d,d+1)\] where $S(\cdot,\cdot)$ denotes the Stirling number of the second kind.
\end{prop}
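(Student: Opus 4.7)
The plan is to decompose any permutation $w$ with $\Pin(w) = P := \{n-d+1,\ldots,n\}$ into an alternating structure of pinnacles and non-pinnacle blocks, and to count the choices at each stage. Since every pinnacle value lies in $P$, we may write
\[
w \ =\ B_0\, p_1\, B_1\, p_2\, B_2 \cdots p_d\, B_d,
\]
where $(p_1,\ldots,p_d)$ is some arrangement of $P$ and each $B_i$ is a (possibly empty) word in the alphabet $Q := [n-d]$.

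First I would argue that each of the $d+1$ blocks is forced to be nonempty. The boundary blocks $B_0$ and $B_d$ must be nonempty because no pinnacle can occupy the first or last position of $w$. An internal block $B_i$ (with $1 \le i \le d-1$) must be nonempty, because otherwise $p_i$ and $p_{i+1}$ would be adjacent, and then whichever of the two is smaller would have a larger left or right neighbor and so fail to be a peak. Hence $(B_0,\ldots,B_d)$ is an ordered set partition of $Q$ into $d+1$ nonempty blocks, of which there are $(d+1)!\cdot S(n-d,d+1)$.

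Next I would show that each $B_i$ must be arranged as a \emph{V-shape}: its letters strictly decrease to the minimum, then strictly increase. The leftmost and rightmost letters of $B_i$ are never peaks of $w$, because each of their neighbors in $w$ is either the boundary of the word or a pinnacle in $P$ (hence larger than any letter of $Q$). So the only peak constraint on $B_i$ comes from its interior letters, each of which must fail to be a local maximum; this is precisely the no-peak-in-a-sequence condition, which characterizes V-shapes. Conversely, any V-shape arrangement is compatible with the adjacent pinnacles, since every letter of $Q$ is smaller than every element of $P$. A V-shape on a block of size $k$ is determined by choosing, for each of the $k-1$ non-minimum letters, whether it lands to the left or to the right of the minimum, giving $2^{k-1}$ V-shapes. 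Taking the product over all blocks yields
\[
\prod_{i=0}^{d} 2^{|B_i|-1} \ =\ 2^{(n-d)-(d+1)} \ =\ 2^{n-2d-1}.
\]

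Finally, because every neighbor of every $p_i$ in $w$ lies in $Q$ (and so is smaller than every pinnacle), the ordering $(p_1,\ldots,p_d)$ of $P$ is unconstrained, contributing a free factor of $d!$. Multiplying the three independent choices then gives exactly $d!\cdot(d+1)!\cdot S(n-d,d+1)\cdot 2^{n-2d-1}$, as claimed. The main obstacle is the boundary bookkeeping: one has to confirm carefully that $B_0$ and $B_d$ cannot be empty (so that the ordered-partition count is right) and that the V-shape condition is exactly the right restriction on each block — neither more nor less — once the surrounding pinnacles are accounted for.
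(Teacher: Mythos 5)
Your proposal is correct and follows essentially the same decomposition as the paper's proof: order the $d$ pinnacles freely ($d!$ ways), distribute the remaining $n-d$ letters into the $d+1$ necessarily nonempty regions ($(d+1)!\,S(n-d,d+1)$ ways), and arrange each region with no internal peak ($2^{k-1}$ ways per region of size $k$, which the paper obtains by citing $p_{\emptyset}(k)=2^{k-1}$ rather than re-deriving the V-shape count). Your extra care with the boundary blocks and the exactness of the V-shape condition is sound and only makes explicit what the paper leaves implicit.
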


\begin{proof}
Let $w$ be a permutation in $S_n$ with pinnacle set $S=\{n-d+1,\ldots,n\}$.
Then $w$ has exactly $d$ peaks. Since the $d$ elements in $S$ are each greater than any of the elements of $[n]\setminus S$, these pinnacles are independent of whatever non-pinnacle values are above them.
To construct such a $w$, we can start by ordering the elements of $S$ as pinnacles in $w$, in $d!$ ways.

There are $n-d$ remaining elements to place in the $d+1$ regions around these $d$ peaks.
The Stirling number $S(n-d,d+1)$ counts the number of set partitions of $[n]\setminus S$ into $d+1$ nonempty subsets. Given such a set partition, there are $(d+1)!$ ways to order the subsets, i.e., to choose which subset goes in which region around the peaks.

Finally, it must be the case that the elements in the regions between peaks are arranged in such a way that there are no new peaks. We know by Lemma~\ref{lem:null/reduction} that if there are $k$ elements in a given region, then there are $p_{\emptyset}(k) = 2^{k-1}$ permutations of these elements that have no peaks. 

Let $k_1,\ldots,k_{d+1}$ be the sizes of the subsets in each region between peaks. The product across all regions is
\[
 \prod_{i=1}^{d+1} 2^{k_i-1} = 2^{\sum k_i -(d +1)} = 2^{n-2d-1},
\]
since $\sum k_i = n-d$ is the total number of elements that are not peaks.
\end{proof}

Next we will show that $p_{[n+1-d,n]}(n)$ is maximal among all admissible pinnacle sets having $d$ elements. We preface that work with a lemma that will aid an inductive argument.

\begin{lemma}[Lifting property]\label{lem:lift}
Suppose that $S$ and $T$ are admissible pinnacle sets with $|S|=|T|$, neither of which contains $n$. Then
\[
\mbox{ if } p_S(n-1) \leq p_T(n-1), \mbox{ then } p_{S \cup \{n\}}(n) \leq p_{T\cup \{n\}}(n). 
\]
\end{lemma}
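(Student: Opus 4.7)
My plan is to proceed by induction on $n$, using the linear recurrence of Proposition~\ref{prp:linear} as the main tool. The base case is $n \le 2(d+1)$ with $d = |S| = |T|$: here Lemma~\ref{lem:halfn} implies no admissible pinnacle set of size $d+1$ fits in $[n]$, so both $p_{S\cup\{n\}}(n)$ and $p_{T\cup\{n\}}(n)$ vanish. For the inductive step, Proposition~\ref{prp:linear} gives
\[
 p_{S \cup \{n\}}(n) = (n - 2d - 2)\,p_S(n-1) + 2 \sum_{j \in [n-1] \setminus S} p_{S \cup \{j\}}(n-1),
\]
and analogously for $T$. Admissibility of $S \cup \{n\}$ combined with Lemma~\ref{lem:halfn} forces $n > 2(d+1)$, so the coefficient $n - 2d - 2$ is strictly positive; hence the first-term contribution $(n-2d-2)\,p_S(n-1) \le (n-2d-2)\,p_T(n-1)$ is immediate from the hypothesis.

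The main difficulty is controlling the sum terms. A direct termwise comparison is not available: for example, at $n=9$ with $S = \{5,6\}$ and $T=\{4,7\}$, one can verify that $p_S(8)<p_T(8)$ yet $\sum_j p_{S\cup\{j\}}(8)>\sum_j p_{T\cup\{j\}}(8)$. So the proof must combine the two terms, with any deficit in the sum comparison absorbed by the first-term surplus, amplified by the factor $n-2d-2$. My plan is to interpret the right-hand side of the recurrence as the cardinality of $X_S = \{w \in S_n : \Pin(w) = S \cup \{n\}\}$, split into a \emph{Case A} part (removing $n$ from $w$ leaves pinnacle set $S$) of size $(n-2d-2)\,p_S(n-1)$, and a \emph{Case B} part (removing $n$ leaves pinnacle set $S\cup\{j\}$ for some $j$) of size $2\sum_j p_{S\cup\{j\}}(n-1)$. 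I would then construct an injection $X_S \hookrightarrow X_T$ by first using the hypothesis-provided injection $\phi\colon\{v:\Pin(v)=S\}\hookrightarrow\{v:\Pin(v)=T\}$ to map Case A of $X_S$ into Case A of $X_T$ (matching corresponding free insertion gaps, of which both sides have exactly $n-2d-2$ per permutation), and then injecting Case B of $X_S$ into the remaining unused capacity of $X_T$, of total size $(n-2d-2)(p_T(n-1)-p_S(n-1)) + 2\sum_j p_{T\cup\{j\}}(n-1)$.

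The hardest step will be constructing this second injection. It cannot send Case B to Case B alone, since the sum counts are not monotone, so it must also draw on the ``slack'' in the leftover Case A slots of $X_T$. I expect to exploit a local transformation on permutations near the extra pinnacle $j$ that trades a Case B configuration for a Case A configuration paired with a surplus $T$-permutation; the inductive hypothesis applied at $n-1$ (using Lemma~\ref{lem:null/reduction} to descend from $p_S(n-1)\le p_T(n-1)$ to $p_S(n-2)\le p_T(n-2)$ when $n-1\notin S\cup T$) would then guarantee that enough such surplus $T$-permutations are available to absorb every Case B element on the $S$-side.
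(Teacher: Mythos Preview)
Your proposal takes a very different route from the paper's. The paper's own proof is a one-paragraph direct argument: it asserts that inserting $n$ into a permutation $u\in S_{n-1}$ with $d$ peaks yields a permutation with $d+1$ peaks in exactly $n-2d$ ways, and concludes $p_{S\cup\{n\}}(n)=(n-2d)\,p_S(n-1)$, from which the lemma is immediate since the same constant multiplies both sides. As you implicitly observe by writing out the full linear recurrence, this identity is not correct: the recurrence genuinely has the sum term, and the leading coefficient should be $n-2d-2$. For instance $p_{\{5,6\}}(6)=144$ while $p_{\{5\}}(5)=56$, so no integer constant can work. Your diagnosis that the two-term recurrence is what must really be controlled is on the mark.

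That said, your proposal does not close the gap either. The Case~A injection via $\phi$ and matched gaps is fine, but for Case~B you offer only an aspiration: you ``expect to exploit a local transformation'' and hope that the inductive hypothesis ``would then guarantee that enough surplus $T$-permutations are available.'' No such transformation is specified, and the inductive hypothesis you cite at level $n-1$ compares only $p_{S\cup\{n-1\}}(n-1)$ with $p_{T\cup\{n-1\}}(n-1)$ (and only when $n-1\notin S\cup T$); it says nothing about the remaining summands $p_{S\cup\{j\}}(n-1)$ for $j<n-1$ or how to match them against $T$-terms. Your own example $S=\{5,6\}$, $T=\{4,7\}$ shows the sums are not termwise comparable, so a genuinely new idea is needed---either an explicit Case~B injection or an inequality showing the sum deficit is always bounded by $(n-2d-2)\bigl(p_T(n-1)-p_S(n-1)\bigr)$. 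Until that step is supplied, what you have is a correct identification of the difficulty rather than a proof.
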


\begin{proof}
Suppose that $|S|=|T|=d$. By the argument that precedes Proposition \ref{prp:linear}, consider any permutation $u \in S_{n-1}$ having $d$ peaks. We have $n-2d$ gaps into which we can insert $n$ to get a permutation with $d+1$ peaks, such that $n$ is a pinnacle. Thus, because $S$ and $T$ each have $d$ elements,
\[
p_{S \cup \{n\}}(n) = (n-2d)p_S(n-1) \text{ \ \ and \ \ } p_{T\cup \{n\}}(n) = (n-2d)p_T(n-1),
\]
yielding the desired implication.
\end{proof}

The following result establishes the upper bound in Theorem \ref{thm:bounds}. More precisely, the following result will describe the pinnacle sets that are achieved most frequently by permutations in $S_n$, and Proposition~\ref{prop:stirling formula} gave the corresponding enumeration.

\begin{prop}[Upper bounds]\label{prop:upper}
Let $d$ and $n$ be any positive integers such that $2d < n$. Then for any admissible pinnacle set $S \subseteq [n]$ with $|S|=d$, we have
\[
 p_S(n) \leq p_{[n+1-d,n]}(n).
\]
\end{prop}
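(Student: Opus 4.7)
The plan is a double induction, outer on $d=|S|$ and inner on $n$, with the base case $d=0$ being trivial since then $S=\emptyset=[n+1,n]$ and both sides equal $p_{\emptyset}(n)$. For the inductive step, let $S$ be admissible with $|S|=d$ and set $m:=\max S$; I split into two cases depending on whether $m=n$ or $m<n$.

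If $m=n$, write $S=S'\cup\{n\}$ with $S':=S\setminus\{n\}\subseteq[n-1]$ of size $d-1$ (admissible by Proposition~\ref{prop:admissible}). The outer inductive hypothesis applied to $S'$ in $[n-1]$ gives
\[
 p_{S'}(n-1)\;\leq\;p_{[n-d+1,\,n-1]}(n-1).
\]
Since neither $S'$ nor $[n-d+1,\,n-1]$ contains $n$ and both have size $d-1$, the Lifting Property (Lemma~\ref{lem:lift}) immediately upgrades this inequality to $p_S(n)=p_{S'\cup\{n\}}(n)\leq p_{[n-d+1,\,n-1]\cup\{n\}}(n)=p_{[n-d+1,\,n]}(n)$, as desired.

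If $m<n$, then $S\subseteq[n-1]$, so Lemma~\ref{lem:null/reduction} with $t=n-1$ gives $p_S(n)=2\,p_S(n-1)$. The inner inductive hypothesis at $(n-1,d)$---valid since $2d<m<n$ forces $2d<n-1$---yields $p_S(n-1)\leq p_{[n-d,\,n-1]}(n-1)$. Using Proposition~\ref{prop:stirling formula} on both target sets,
\[
 2\,p_{[n-d,\,n-1]}(n-1)\;=\;d!\,(d+1)!\,2^{n-2d-1}\,S(n-d-1,\,d+1),
\]
\[
 p_{[n-d+1,\,n]}(n)\;=\;d!\,(d+1)!\,2^{n-2d-1}\,S(n-d,\,d+1),
\]
so the remaining inequality collapses to $S(n-d-1,\,d+1)\leq S(n-d,\,d+1)$, which is immediate from the standard Stirling recurrence $S(N,k)=k\,S(N-1,k)+S(N-1,k-1)$.

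The inner induction is anchored at $n=2d+1$: admissibility forces $\max S>2d$, hence $\max S=n$ for every admissible $S\subseteq[n]$ of size $d$, so only the first case arises and we lean solely on the outer hypothesis. The main conceptual obstacle is the second case, where a direct combinatorial attempt to transform an arbitrary $S$ into $[n-d+1,n]$ would be awkward; routing through the explicit formula of Proposition~\ref{prop:stirling formula} sidesteps this by converting the comparison into a one-line consequence of the Stirling recurrence.
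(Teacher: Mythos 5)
Your proof is correct and follows essentially the same route as the paper's: the same case split on whether $n\in S$, the same use of the Lifting Property in the first case, and the same reduction-plus-Stirling-recurrence comparison in the second. The only differences are cosmetic --- you anchor at $d=0$ rather than $d=1$ and organize the induction more explicitly as a double induction, and you dispose of the boundary case $d=(n-1)/2$ by noting admissibility forces $\max S=n$, whereas the paper phrases the same fact as $p_S(n)=0$ when $n\notin S$ there.
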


\begin{proof}
We proceed by induction on $n$ and $d$.

The enumeration of admissible pinnacle sets with one element, given in Equation~\eqref{eq:onepeak}, shows that this bound holds in the case when $d=1$ and $n> 2$.

Now suppose that the inequality holds for admissible pinnacle sets that are subsets of $[n-1]$ and that have cardinality less than $(n-1)/2$.

Let $S \subseteq [n]$ be an admissible pinnacle set of cardinality $d$. If $n \in S$, then write $S = S'\cup \{n\}$ for $S' \subseteq [n-1]$. Suppose that $d < n/2$. Then since $|S'|=d-1 < n/2-1 < (n-1)/2$, we can claim, by the inductive hypothesis, that
\[
 p_{S'}(n-1) \leq p_{[n+1-d,n-1]}(n-1).
\]
Now by the lifting property in Lemma \ref{lem:lift}, we have
\[
 p_S(n) \leq p_{[n+1-d,n]}(n),
\]
as desired.

If $n\notin S$ and $d < (n-1)/2$, then Lemma~\ref{lem:null/reduction} yields $p_S(n) = 2p_S(n-1)$. Hence, the induction hypothesis shows that
\[
 p_S(n) = 2p_S(n-1)\leq 2p_{[n-d,n-1]}(n-1).
\]
Further, using our explicit formula from Proposition \ref{prop:stirling formula}, we have
\begin{align*}
2p_{[n-d,n-1]}(n-1) &= 2\left(d!(d+1)!2^{n-2d-2}S(n-1-d,d+1)\right)\\
 &=d!(d+1)!2^{n-2d-1}S(n-1-d,d+1)\\
 &<d!(d+1)!2^{n-2d-1}S(n-d,d+1) = p_{[n+1-d,n]}(n).
\end{align*}

If $n$ is even, then we are done. But if $n$ is odd, then we must also consider the case where $d=(n-1)/2$.

Suppose that $|S| = d = (n-1)/2$. Further suppose that $u \in S_n$ is a permutation of $n=2d+1$ elements having $d$ peaks. Then 
\[
 u(1) < u(2) > u(3) < \cdots >u(2d-1) < u(2d) > u(2d+1).
\]
With this structure, the letter $n$ must be a pinnacle of $u$. Hence if $n\notin S$ and $|S|=(n-1)/2$, then $S$ is not an admissible pinnacle. Thus, $p_S(n) = 0$ and the result follows trivially.
\end{proof}

Next we will prove that for admissible pinnacle sets with $d$ elements, the one that minimizes $p_S(n)$ (that is, the one achieved least often by permutations in $S_n$) is the admissible pinnacle set whose elements are as small as possible. This is the set $\{3,5,\ldots,2d+1\}$. Let us denote this minimizing set
\[M_d := \{ 2k+1 : k=1,\ldots,d\}.\]
We have the following enumerative result.

\begin{prop}[Enumerating permutations with minimal pinnacles]\label{prp:lowcount}
Let $d$ and $n$ be any positive integers such that $2d < n$. Then the number of permutations in $S_n$ with pinnacle set $M_d$ is
\[
 p_{M_d}(n) = 2^{n-d-1}.
\]
\end{prop}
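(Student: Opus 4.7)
The plan is to induct on $d$, using the linear recurrence of Proposition~\ref{prp:linear} together with the admissibility characterization of Proposition~\ref{prop:admissible}. First, by Lemma~\ref{lem:null/reduction}, it suffices to establish the case $n = 2d+1$, since then
\[
p_{M_d}(n) = 2^{n-(2d+1)} \cdot p_{M_d}(2d+1),
\]
and the target formula $2^{n-d-1}$ will follow from $p_{M_d}(2d+1) = 2^d$. The base case $d=1$ gives $M_1 = \{3\}$, and one checks directly (or appeals to Equation~\eqref{eq:onepeak} with $l=3$) that $p_{\{3\}}(3) = 2 = 2^1$.

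For the inductive step, apply Proposition~\ref{prp:linear} to $S = M_d$ with $m = \max S = 2d+1$ and $n = m$. The factor $2^{n-m}$ equals $1$, and $m - 2d = 1$, so the recurrence collapses to
\[
p_{M_d}(2d+1) = p_{M_{d-1}}(2d) + 2\sum_{j \in [2d+1]\setminus M_d} p_{M_{d-1} \cup \{j\}}(2d).
\]
The crux of the argument, and the step I would emphasize, is to show that every summand on the right vanishes. Any $j \in [2d+1] \setminus M_d$ belongs to $\{1,2,4,6,\ldots,2d\}$, and the set $M_{d-1} \cup \{j\}$ has size $d$ and maximum at most $2d$. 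Since Proposition~\ref{prop:admissible} requires the maximum of an admissible pinnacle set to strictly exceed twice its cardinality, the inequality $2d > 2d$ fails, so $M_{d-1} \cup \{j\}$ is not admissible and $p_{M_{d-1} \cup \{j\}}(2d) = 0$.

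Once the sum is eliminated, we have $p_{M_d}(2d+1) = p_{M_{d-1}}(2d)$. Applying Lemma~\ref{lem:null/reduction} with $t = 2d-1 = \max M_{d-1}$ gives $p_{M_{d-1}}(2d) = 2\,p_{M_{d-1}}(2d-1)$, and the inductive hypothesis supplies $p_{M_{d-1}}(2d-1) = 2^{d-1}$. Hence $p_{M_d}(2d+1) = 2^d$, which combined with the initial reduction yields $p_{M_d}(n) = 2^{n-d-1}$ for all $n \geq 2d+1$. The only potential obstacle is the vanishing claim for the replacement terms, but this is clean because $M_{d-1}$ already has the tightest possible maximum relative to its size, so adjoining any element below $2d+1$ cannot produce a legal pinnacle set.
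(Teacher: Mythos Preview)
Your proof is correct and follows essentially the same route as the paper: apply the linear recurrence (Proposition~\ref{prp:linear}) at $n=m=2d+1$, observe that the sum over replacement sets $T=M_{d-1}\cup\{j\}$ contributes nothing, reduce via Lemma~\ref{lem:null/reduction}, and induct. If anything, your treatment of the vanishing sum is more careful than the paper's---the paper asserts that ``the second summand ranges over an empty set,'' whereas you correctly note that the index set $[2d+1]\setminus M_d$ is nonempty but that each $T$ fails the admissibility criterion $\max T>2|T|$, forcing $p_T(2d)=0$.
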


\begin{proof}
The formula is a direct application of the linear recurrence in Equation~\eqref{eq:rec2}, noting that the second summand ranges over an empty set. Hence the sets $M_d$ yield this recurrence:
	\[
	 p_{M_d}(2d+1) = (2d+1-2d)p_{M_{d-1}}(2d) = p_{M_{d-1}}(2d) = 2p_{M_{d-1}}(2d-1),
	\]
with base case $p_{\{3\}}(3) = 2$. Hence $p_{M_d}(2d+1) = 2^d$, and for $n\geq 2d+1$, we use Lemma~\ref{lem:null/reduction} to obtain
\[
 p_{M_d}(n) = 2^{n-d-1},
\]
as desired.
\end{proof}

Alternatively, one could prove Proposition~\ref{prp:lowcount} by explicitly constructing such a permutation. For, if $w \in S_{2d+1}$ has $\Pin(w) = \{3,5,\ldots,2d+1\}$, then $w$ has a simple structure: either $w = (2d)(2d+1)w'$ or $w=w'(2d+1)(2d)$, where $w'$ has $\Pin(w') = M_{d-1}$. This choice of two options at each of $d$ steps gives rise to $2^d$ such permutations. 

\begin{example}
The permutation $w'=13254$ has $\Pin(w') = \{3,5\}$, and there are only two ways to insert $6$ and $7$ in $w'$ to form a permutation $w \in S_7$ with $\Pin(w) = \{3,5,7\}$: either $w = 6713254$ or $w = 1325476$.
\end{example}

If $w\in S_n$ has $\Pin(w) = M_d$, with $n > 2d+1$, then any numbers larger than $2d+1$ have the choice of going on the far left or far right of the permutation, as in the discussion prior to Lemma~\ref{lem:null/reduction}. That is, $w = u w' v$, where $w'\in S_{2d+1}$ has $\Pin(w') = M_d$, the elements of $u$ are decreasing, and the elements of $v$ are increasing.

We will keep this structure in mind for the proof of the following result, which establishes the lower bound in Theorem \ref{thm:bounds}.

\begin{prop}[Lower bounds]\label{prp:lower}
	Let $d$ and $n$ be any positive integers such that $2d < n$. Then for any admissible pinnacle set $S\subseteq[n]$ with $|S|=d$, we have
\[
		p_S(n) \geq  p_{M_d}(n) = 2^{n-d-1}.
\]
\end{prop}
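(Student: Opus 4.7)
The plan is to induct on $d = |S|$, using the linear recurrence of Proposition~\ref{prp:linear} as the engine. The base case $d = 0$ is immediate: by Lemma~\ref{lem:null/reduction}, $p_{\emptyset}(n) = 2^{n-1} = 2^{n-d-1}$, which matches the claimed lower bound with equality.

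For the inductive step, fix an admissible pinnacle set $S \subseteq [n]$ with $|S| = d \geq 1$ and $2d < n$, and let $m = \max S$. By Proposition~\ref{prop:admissible}, the set $S' := S \setminus \{m\}$ is admissible and $m > 2d$, so in particular $m - 2d \geq 1$ and $2(d-1) < m - 1$. Hence the inductive hypothesis applies to $S'$ at size $m-1$, yielding
\[
 p_{S'}(m-1) \geq 2^{(m-1) - (d-1) - 1} = 2^{m - d - 1}.
\]
Now I apply the linear recurrence of Proposition~\ref{prp:linear} to $S$ at $n$. Since every summand in the inner sum is nonnegative and $m - 2d \geq 1$, we may discard the inner sum and bound
\[
 p_S(n) \;=\; 2^{n-m}\!\left( (m - 2d)\, p_{S'}(m-1) + 2 \!\!\sum_{\substack{T = S' \cup \{j\} \\ j \in [m]\setminus S}}\!\! p_T(m-1) \right) \;\geq\; 2^{n-m}(m - 2d)\, p_{S'}(m-1).
\]
Combining this with the inductive bound on $p_{S'}(m-1)$ gives
\[
 p_S(n) \;\geq\; 2^{n-m} \cdot 1 \cdot 2^{m-d-1} \;=\; 2^{n-d-1},
\]
which is exactly $p_{M_d}(n)$ by Proposition~\ref{prp:lowcount}.

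There is essentially no obstacle here: all the structural work was done in setting up the linear recurrence and in establishing that admissibility forces $m > 2d$, so that the coefficient $m - 2d$ of the ``main'' term is positive. The only care needed is checking that the inductive hypothesis can legitimately be invoked on $S'$ at the smaller value $m - 1$, which is where the inequalities $m \geq 2d + 1$ and $|S'| = d - 1$ come together. Sharpness of the bound follows from Proposition~\ref{prp:lowcount}, since the sets $M_d$ already achieve $p_{M_d}(n) = 2^{n-d-1}$.
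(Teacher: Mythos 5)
Your proof is correct, but it takes a genuinely different route from the paper. The paper proves the lower bound by constructing an explicit injection from the set of permutations in $S_n$ with pinnacle set $M_d$ into the set of those with pinnacle set $S$: writing such a permutation as $uw'v$ with $w' \in S_{2d+1}$, it replaces the pinnacles $3,5,\ldots,2d+1$ of $w'$ by the elements of $S$ in the same relative order and redistributes the remaining values order-isomorphically, then checks no new peaks are created. Your argument instead inducts on $d$ and runs the linear recurrence of Proposition~\ref{prp:linear}: since admissibility forces $m - 2d \geq 1$, discarding the nonnegative second summand gives $p_S(n) \geq 2^{n-m} p_{S\setminus\{m\}}(m-1)$, and the inductive hypothesis (which applies because $2(d-1) < m-1$) closes the loop. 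Both are valid; your checks that the hypothesis of Proposition~\ref{prp:linear} holds ($n \geq m$ since $S \subseteq [n]$) and that the base case $d=0$ covers the $d=1$ step are the right ones. The trade-off is that your proof is shorter and more mechanical but leans on the recurrence machinery, whereas the paper's injection is self-contained and exhibits, inside the permutations counted by $p_S(n)$, an explicit combinatorial copy of the $2^{n-d-1}$ permutations with the minimal pinnacle set — which also makes the sharpness of the bound visibly structural rather than a consequence of the separate computation in Proposition~\ref{prp:lowcount}.
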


\begin{proof}
	Fix $d$ and $n>2d$. Let $S\subseteq [n]$ be an admissible pinnacle set with $|S|=d$. 
	Let $A$ denote the set of permutations in $S_n$ with pinnacle set $M_d$, and let $B$ denote the set of permutations in $S_n$ with pinnacle set $S$. We will construct an injection from $A$ to $B$ as follows.
	
	Let $w \in A$. Then $w = uw'v$, a concatenation of strings, where $w' \in S_{2d+1}$ has $\Pin(w') = M_d$, $u$ is a list of decreasing elements, and $v$ is a list of increasing elements.
	Now order the elements of set $S = \{ s_1 < s_2 < \cdots < s_d\}$, and recall that $s_k \geq 2k+1$ for each $k = 1,\ldots, d$. We will define the permutation $\widehat{u}\ww'\widehat{v}=\ww \in B$ as follows. 
	
	First, replace the pinnacles of $w'$ with the elements of $S$, in the same relative order. That is, if $w'(j) = 2k+1$, then $\ww'(j) = s_k$. 
	Second, replace the remaining elements of $w$, i.e., its non-pinnacles, with the elements of $[n] \setminus S$ in the same relative order as the non-pinnacles of $w$.
	From this construction we have formed $\widehat{u}$ and $\widehat{v}$ by placing the elements of $[n]\setminus \{\ww'(i)\} =\{b_1 < \cdots < b_{n-2d-1}\}$ in the same positions and same relative order as the elements of $[n]\setminus \{w'(i)\} = \{a_1 < \cdots < a_{n-2d-1}\}$ had been in $w$. That is, each $a_i$ is replaced by $b_i$.
	
	For example, consider $M_2 = \{3,5\}$. One permutation in $S_9$ with pinnacle set $\{3,5\}$ is $w = 813254679$. Here we have $u=8$, $w'= 13254$, and $v=679$. If $S = \{4,6\}$, then we first replace $3$ by $4$ and $5$ by $6$.
	Then, we place the letters of $[9]\setminus\{4,6\}$ in $\ww$ in the same relative order as the letters of $[9] \setminus \{3,5\}$ in $w$:
	\[
	 \begin{array}{cccccccccc}
	 w= & 8 & 1 & 3 & 2 & 5 & 4 & 6 & 7 & 9 \\
	 & & & \downarrow & & \downarrow & \\ 
	 & \cdot & \cdot & \mathbf{4} & \cdot & \mathbf{6} & \cdot & \cdot & \cdot & \cdot \\
	 & \downarrow & \downarrow & & \downarrow & & \downarrow &  \downarrow & \downarrow & \downarrow \\
	 \ww= & \mathbf{8} & \mathbf{1} & 4 & \mathbf{2} & 6 & \mathbf{3} & \mathbf{5} & \mathbf{7} & \mathbf{9}
	 \end{array}
	\]
yielding $\ww = \widehat{u}\ww'\widehat{v} = 814263579$.
	
	The construction of this permutation $\ww$ guarantees that $\ww \in B$. 
	 The only other pinnacles that $\ww$ could have would be in $\widehat{u}$, in $\widehat{v}$, or at either end of $\ww'$. However, the strings $\widehat{u}$ and $\widehat{v}$ are monotonic, so they contain no peaks, while the left end of $\ww'$ is an ascent and the right end of $\ww'$ is preceded by a descent, so these cannot be peaks either. Therefore $\ww \in B$.
	
	We claim that the map $w = uw'v \mapsto \widehat{u}\ww'\widehat{v}=\ww$ is an injection from $A$ to $B$. 
	Indeed, if $w$ and $x$ are two different permutations in $A$, then their peak sets are either distinct or identical.
	If their peak sets are distinct, then the peak sets of their images in $B$ are also distinct, and so $w$ and $x$ must map to distinct elements in $B$.
	If their peak sets are the same, then the remaining elements are in distinct relative orders, and so their images in $B$ also have their elements of $[n] \setminus S$ in distinct relative orders, hence the images of $w$ and $x$ are distinct.
	Therefore, the map is indeed injective.
\end{proof}

The results in this section allow us to find admissible pinnacle sets $S$ that maximize and minimize $p_S(n)$, for fixed $n$. For the lower bound, we have
\[
 \min\{p_S(n): \mbox{admissible } S\subseteq [n] \} = \min\{ 2^{n-d-1} : d < n/2\} = 2^{\lfloor n/2 \rfloor}.
\]
For the upper bound, we have something a little less satisfying:
\[
 \max\{p_S(n): \mbox{admissible } S\subseteq [n] \} = \max\{ d!(d+1)!2^{n-2d-1}S(n-d,d+1) : d < n/2\}.
\]
This introduces an interesting statistic.

\begin{defn}\label{defn:maximizing with stirling}
For fixed $n$, let $d(n) = d < n/2$ be the value maximizing the expression $d!(d+1)!2^{n-2d-1}S(n-d,d+1)$.
\end{defn}

We can compute $d(n)$ for small values of $n$, and some of this data appears in Table \ref{tab:maxdata}.

\begin{table}[htbp]
\[
\begin{array}{c|| ccccccccccccccccccccc}
\raisebox{.1in}[.2in][.1in]{}n  &  4 & 5 & 6 & 7 & 8 & 9 & 10 & 11 & 12 & 13 & 14 & 15 & 16 & 17 & 18 & 19 & 20 & 21 & 22\\
\hline
\raisebox{.1in}[.2in][.1in]{}d(n) & 1& 1& 1 & 2& 2& 2& 3 & 3 & 3& 4& 4& 4& 4& 5& 5& 5& 6& 6& 6  \\
\end{array}
\]
\[
\begin{array}{c ccccccccccccccccccccc}
\raisebox{.1in}[.2in][.1in]{}\cdots  &  5000 & 5001 & 5002 & 5003 & 5004 & 5005 & 5006 & 5007 & 5008 & 5009 & 5010 \\
\hline
\raisebox{.1in}[.2in][.1in]{}\cdots & 1598 & 1598 & 1599 & 1599& 1599& 1600& 1600 & 1600 & 1600& 1601& 1601  \\
\end{array}
\]
\caption{The value $d(n)$ that maximizes $d!(d+1)!2^{n-2d-1}S(n-d,d+1)$, producing the maximum value for $p_S(n)$ across all admissible pinnacle sets $S \subseteq [n]$.}\label{tab:maxdata}
\end{table}

Initially, $d(n)$ appears to be a step function that increases by one as $n$ increases by three. But $d(16) = 4$ shows that this pattern is false in general. In Figure~\ref{fig:maxdata}, we plot the function $d(n)$ for $n\leq 200$. A first look at this picture suggests that the step function cycles through seven plateaus of width three and an eighth plateau of width four, but this pattern also does not persist. For example, $d(n)=12$ for the four consecutive values from $n=38$ to $n=41$ and $d(n)=20$ for the four consecutive values from $n=63$ to $n=66$. But the next plateau of four is only seven steps away: $d(n) = 27$ from $n=85$ to $n=88$.

\begin{figure}[htbp]
\includegraphics[scale=.7]{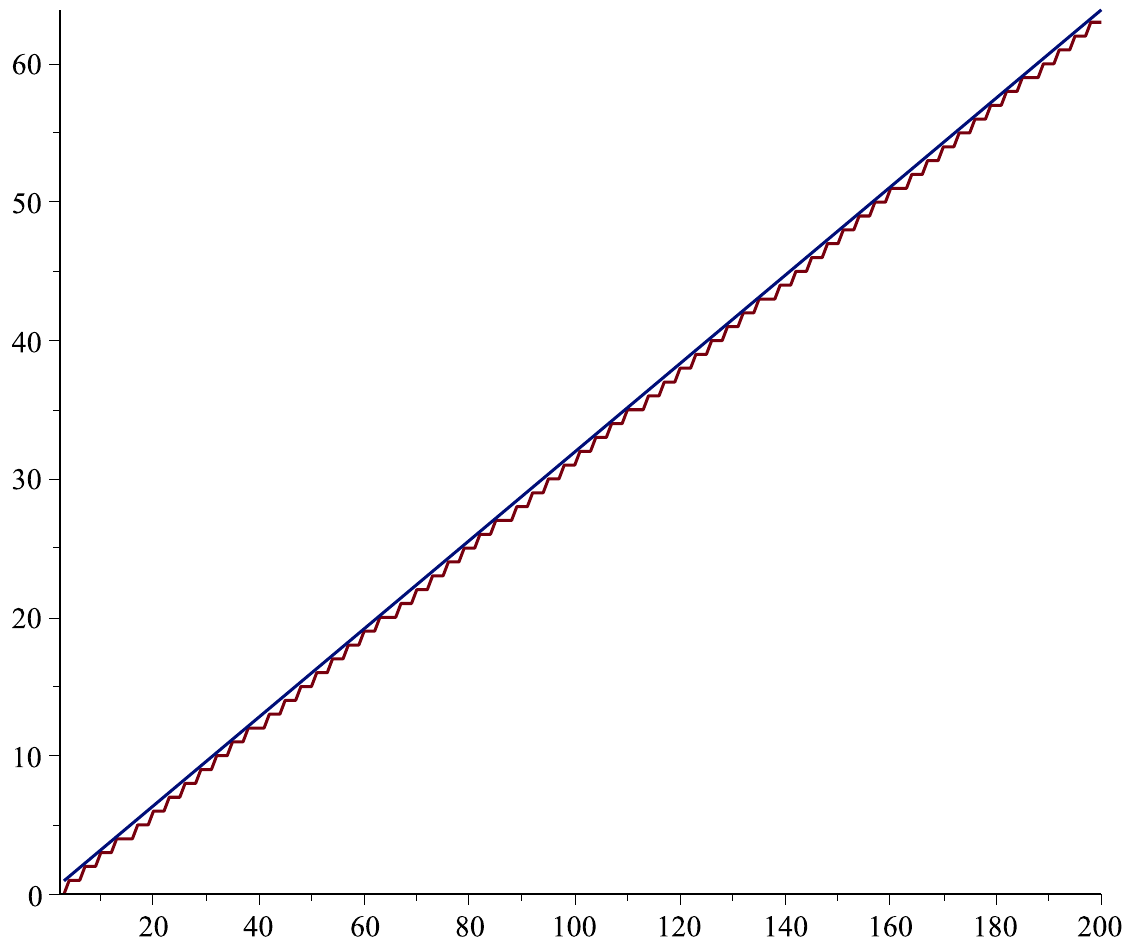}
\caption{The step function $d(n)$ for $n\leq 200$, plotted alongside the line $d=n/3.13$. Some plateaus have width four; all others have width three.}\label{fig:maxdata}
\end{figure}

In Table \ref{tab:dn} we list the values of $n$ and $d(n)$ for which there are four consecutive values $n$ with the same $d(n)$, i.e., for which $\{d(n), d(n+1), d(n+2), d(n+3)\}$ is a set of size $1$. All other values of $d(n)$ with $n\leq 200$ come in runs of three. The fact that the plateaus of size four are not quite periodic is puzzling.

\begin{table}[htbp]
\[
\begin{array}{c | ccccccccc}
\raisebox{.1in}[.2in][.1in]{} n & 13 & 38 & 63 & 85 & 110 & 135 & 160 & 185 \\
 \hline
\raisebox{.1in}[.2in][.1in]{} d(n) & 4 & 12 & 20 & 27 & 35 & 43 & 51 & 59
\end{array}
\]
\caption{The values of $n\leq 200$ and corresponding $d(n)$ that mark the beginnings of four consecutive equal values: $d(n)=d(n+1)=d(n+2)=d(n+3)$.}\label{tab:dn}
\end{table}

While it seems that $d(n)$ is approximately $n/3$, an exact formula for $d(n)$ (and hence the maximal value for $p_S(n)$) is so far elusive. By checking values up to around $n=5000$, we see $n/d(n) \approx 3.13$. See Figure \ref{fig:ndn}, in which we have plotted $n/d(n)$ for $n\leq 5000$. Note that the ratio $n/d(n)$ is not monotonic. In particular, the minimum in this domain is $4786/1530 \approx 3.1281$ and the maximum is $4004/1279 \approx 3.1306$.

\begin{figure}[htbp]
\includegraphics[scale=.7]{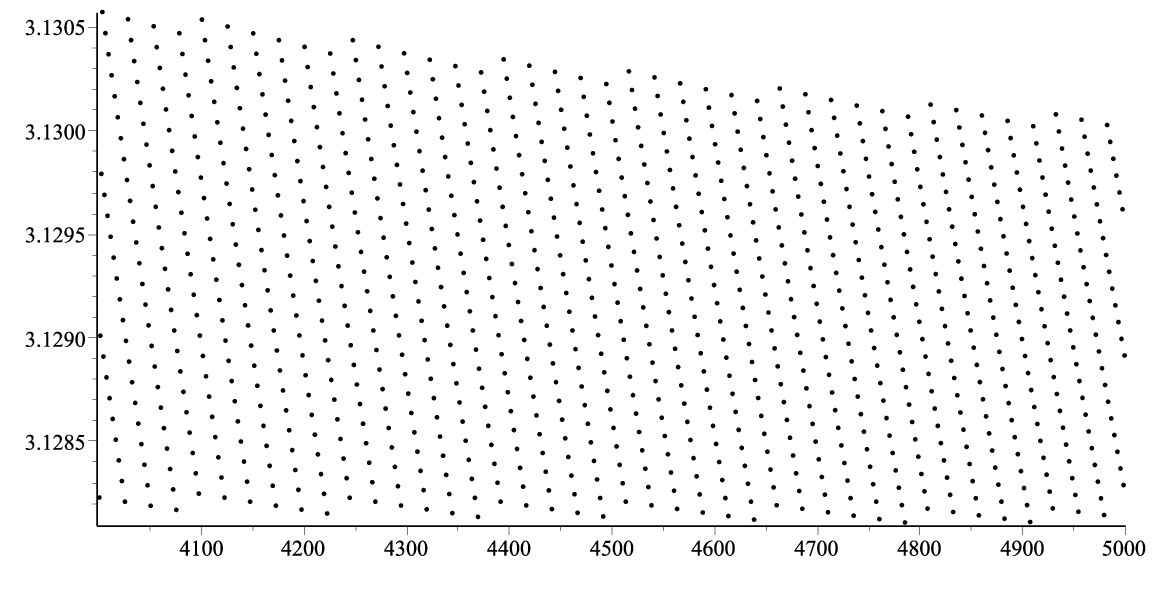}
\caption{Values for $n/d(n)$ with $4000\leq n \leq 5000$. For these values of $n$, $3.1281< n/d(n) < 3.1306$.}\label{fig:ndn}
\end{figure}

\section{Further questions}\label{sec:conclude}

The results proven in this paper are a small sample of the directions in which the study of pinnacle sets may be taken. The first question we pose here is the same one with which we closed the last section.

\begin{question}
	Is there a simple formula for $d(n)$, as introduced in Definition~\ref{defn:maximizing with stirling}? What is $d(n)$ asymptotically?
\end{question}

While there are good asymptotics for Stirling numbers in certain regimes (e.g., Bleick and Wang \cite{BleickWang} give estimates for $S(n,k)$ when either $k$ or $n-k$ grows slowly compared to $n$) these estimates are not obviously helpful here since we apparently need to understand $S(n-d,d+1)$ near $n=3.13d$. That is, we want good asymptotics for something like $S(cd,d+1)$, where $c \approx 2.13$.

Another question seeks to explore nontrivial ways in which permutations with the same pinnacle set are related.

\begin{question}
	For a given $S$, is there a class of operations (e.g., valley hopping as in \cite{Branden}) that one may apply to any $w \in S_n$ with $\Pin(w) = S$ to obtain any other permutation $w' \in S_n$ with $\Pin(w') = S$, and no other permutations?
\end{question}

Among the admissible pinnacle sets $S\subseteq [n]$ of a fixed size, we know which sets $S$ minimize $p_S(n)$ and which maximize $p_S(n)$. However, it seems trickier to compare two randomly selected sets. For example, with $n=7$, here are the $2$-element admissible subsets of $[7]$ ordered according to $p_S(7)$:
\begin{align*}
 p_{\{3,5\}}(7) < p_{\{4,5\}}(7) = p_{\{3,6\}}&(7) < p_{\{3,7\}}(7) \\
 &< p_{\{4,6\}}(7) < p_{\{5,6\}}(7) < p_{\{4,7\}}(7) < p_{\{5,7\}}(7) < p_{\{6,7\}}(7).
\end{align*}
The linear ordering here seems difficult to explain, but a partial ordering on sets that is compatible with comparison might be more feasible. For example the coordinate-wise dominance order shown below is compatible with the ordering on $p_S(n)$.
\[
 \begin{tikzpicture}
  \draw (0,0) node[inner sep=2] (35) {35};
  \draw (1,1) node[inner sep=2] (36) {36};
  \draw (-1,1) node[inner sep=2] (45) {45};
  \draw (0,2) node[inner sep=2] (46) {46};
  \draw (2,2) node[inner sep=2] (37) {37};
  \draw (1,3) node[inner sep=2] (47) {47};
  \draw (0,4) node[inner sep=2] (57) {57};
  \draw (-1,3) node[inner sep=2] (56) {56};
  \draw (-1,5) node[inner sep=2] (67) {67};
  \draw (35)--(36)--(37)--(47)--(57)--(67);
  \draw (35)--(45)--(46)--(47);
  \draw (36)--(46)--(56)--(57);
 \end{tikzpicture}
\]

\begin{question}
 Is there a nontrivial partial order on admissible pinnacle sets such that if $S\leq T$ in the partial order, then $p_S(n) \leq p_T(n)$?
\end{question}
 
In Section~\ref{sec:rec} we established certain recursive formulas for $p_S(n)$, but we only had explicit formulas in a few special cases, such as those used to prove our upper and lower bounds. Perhaps it is possible to do better.

\begin{question}
	For general $n$ and $S$, is there a closed-form, non-recursive formula for $p_S(n)$? 
\end{question}

As a step in this direction, notice that combining the formulas \eqref{eq:onepeak} and \eqref{eq:2peak} from Proposition \ref{prp:12peak} yields the following:
\begin{align*}
p_{\emptyset}(n) &= 2^{n-1},\\
p_{\emptyset}(n) + 2p_{\{l\}}(n) &= 2^{n+l-3},\\
p_{\emptyset}(n) + 2p_{\{l\}}(n) + 2p_{\{m\}}(n) + 4p_{\{l,m\}}(n) &= 2^{n+m-l-3}(3^{l-1}+1).
\end{align*}
It is not completely clear what the pattern might be here, but perhaps for an admissible pinnacle set $S$, the quantity $q_S(n)$ defined as follows,
\[
 q_S(n) = \sum_{I\subseteq S} 2^{|I|}p_I(n),
\]
might be well-behaved. If so, this would give an inclusion-exclusion formula for $p_S(n)$.

\begin{question}
	For general $n$ and $S$, is there a closed-form, non-recursive formula for $q_S(n)$? 
\end{question}

\appendix
\section{Descent topsets}\label{sec:topsets}

In this section we describe some of the enumerative properties of descent topsets as mentioned in Remark \ref{rem:topsets}. The reader interested only in pinnacle sets can safely skip to the next section.

Let $d_I(n)$ denote the number of permutations in $S_n$ for which $\Dtop(w) = I$, while $e_I(n)$ is the number of permutations with $\Dtop(w)\subseteq I$. That is,
\[
 d_I(n) = | \{ w \in S_n : \Dtop(w) = I\}|,\]
 and 
\[ 
e_I(n) = |\{ w \in S_n : \Dtop(w) \subseteq I\}| =\sum_{J\subseteq I} d_J(n).
\]
Notice that these functions are piecewise constant. That is, if $n < \max I$ then $d_I(n) = 0$, while if $n \geq m= \max I$, then $d_I(n) = d_I(m)$. This follows from the observation that for $w \in S_n$ with $\Dtop(w) = I$, all the letters larger than $m$ must appear in increasing order at the far right of $w$, else they create a new descent top. 

Thus for sufficiently large $n$ we can ignore the dependence on $n$ and it follows from inclusion-exclusion that
\[
 d_I = \sum_{J \subseteq I} (-1)^{|I-J|} e_J.
\]
We now give a formula for the $e_J$. 

To describe the formula for $e_J$, we introduce some notation. For a set $S = \{s_1,s_2,\ldots,s_k\}$, define the composition $\alpha(S)= (s_1-1, s_2-s_1, s_3-s_2,\ldots,s_k-s_{k-1})$. Then for $\alpha=(\alpha_1,\ldots,\alpha_k)$ denote by $\alpha!$ the product
\[
 (k+1)^{\alpha_1}k^{\alpha_2}\cdots 3^{\alpha_{k-1}}2^{\alpha_k}.
\]
For example $(2,3,2)! = 4^23^32^2=1728$. If $S=\emptyset$, then $\alpha(\emptyset) = ()$ and $()! = 1$. 

The enumerative result is the following.

\begin{thm}[See Theorem 4.1 of \cite{NovelliThibonWilliams}, Lemma 6.1 of \cite{EhrenborgSteingrimsson}]\label{thm:eJ}
Suppose $n\geq \max J$. Then
\[
 e_J(n)=e_J = \alpha(J)! \quad \mbox{ and } \quad d_I = \sum_{J \subseteq I} (-1)^{|I-J|}\alpha(J)!.
\]
\end{thm}

As an example, if $I = \{3,6,8\}$, we can use Theorem \ref{thm:eJ} to compute
 \begin{align*}
  d_I &= \alpha(\{3,6,8\})! - \alpha(\{3,6\})! - \alpha(\{3,8\})! - \alpha(\{6,8\})! \\
  & + \alpha(\{3\})! + \alpha(\{6\})! + \alpha(\{8\})! - \alpha(\emptyset)!,\\
   &= (2,3,2)! - (2,3)! - (2,5)! - (5,2)! + (2)! + (5)! + (7)! - ()!,\\
   &= 4^2 3^3 2^2 - 3^2 2^3 - 3^22^5 - 3^52^2 + 2^2 + 2^5 + 2^7 - 1,\\
   &= 559.
 \end{align*}

\begin{proof}
The theorem can be proved recursively by thinking of building permutations ``from the top down." That is, suppose we are building a permutation with descent topset contained in $J= \{j_1 < \cdots <j_k\}$. Let $w'$ denote a permutation of $\{j_1,j_1+1,\ldots,n\}$ with descent topset contained in $J'=J-\{j_1\}$. By induction, the number of such $w'$ is $e_{J'}$, and we will argue that $e_J = (k+1)^{j_1-1}e_{J'}$, from which the result follows.

Given $w'$ we will form a permutation $w \in S_n$ with $\Dtop(w) \subseteq J$ by inserting the numbers $\{1,2,\ldots,j_1-1\}$ in the gaps between the potential descent tops indicated by $J$. Since $|J|=k$, there are $k+1$ gaps in which to place these elements, and in each gap the elements must be written in increasing order. Thus there are $(k+1)^{j_1-1}$ ways to form $w$ given $w'$. Hence $e_J = (k+1)^{j_1-1}e_{J'}$ as desired.
\end{proof}

It is interesting to compare this approach to descent top enumeration with the classical result of MacMahon for ordinary descent sets (see Proposition 1.4.1 of \cite{StanleyEC1}), which says the number of permutations in $S_n$ with $\Des(w)\subseteq I = \{i_1 < i_2 < \cdots <i_k\}$ is the multinomial coefficient
\[
 \binom{ n}{i_1, i_2-i_1, i_3-i_2,\ldots,n-i_k}.
\]
(Arrange numbers into increasing runs of lengths $i_1, i_2-i_1, i_3-i_2$, and so on.) By inclusion-exclusion, the number of permutations in $S_n$ with $\Des(w) = I$ is
\[
 \sum_{1\leq r_1 < \cdots < r_j \leq k} (-1)^{k-j}\binom{ n}{i_{r_1}, i_{r_2}-i_{r_1},\ldots,n-i_{r_j}}.
\]

\bibliographystyle{plain}
\bibliography{pinnacle}

\end{document}